\numberwithin{equation}{section}
\crefname{subsection}{Subsection}{Subsections}
\crefname{figure}{Figure}{Figures}
\renewcommand{\tilde}[1]{\accentset{\sim}{#1}}
\newcommand{\til}[1]{\tilde{#1}}
\newcommand{\tu}{{\smash{\til{u}}}}
\newcommand{\tv}{{\smash{\til{v}}}}
\newcommand{\tE}{{\smash{\til{\mathcal{E}}}}}
\newcommand{\tH}{{\til{H}}}
\newcommand{\tX}{{\smash{\til{X}}}}
\newcommand{\lam}{\lambda}
\newcommand\G[1]{\mathcal{G}\{#1\}}
\newcommand{\weak}{\rightharpoonup}
\newcommand{\longweak}{\relbar\joinrel\rightharpoonup}
\newcommand{\eps}{\varepsilon}
\DeclareMathSymbol{\shortminus}{\mathbin}{AMSa}{"39}
\renewcommand{\d}{\textup{d}}
\newcommand{\dd}{\textup{d}}
\newcommand{\dt}{\,\textup{d}t}
\newcommand{\dx}{\,\textup{d}x}
\newcommand{\dth}{\,\textup{d}\theta}
\newcommand{\dl}{\,\textup{d}\lambda}
\newcommand{\ds}{\,\textup{d}s}
\newcommand{\ddt}{\frac{\dd}{\dd t}}
\newcommand{\p}{\partial}
\newcommand{\pt}{\p_t}
\newcommand{\pta}{\p_t^\alpha}
\newcommand{\grad}{{\nabla_{\!\!}}}
\newcommand{\R}{\mathbb{R}}
\newcommand{\I}{\mathcal{I}}
\newcommand{\X}{\mathcal{X}}
\renewcommand{\H}{\mathcal{H}}
\newcommand{\E}{\mathcal{E}}
\newcommand{\con}{\hookrightarrow}
\newcommand{\com}{\mathrel{\mathrlap{{\mspace{4mu}\lhook}}{\hookrightarrow}}}
\newcommand{\drv}[2]{\partial_{#1} #2} %
\newcommand{\norm}[1]{\left\lVert #1 \right\rVert}
\newcommand{\gradH}{\grad_H}
\newcommand{\gradHt}{\grad_\tH}
\newtheorem{remark}{Remark}
\newtheorem{lemma}{Lemma}
\newtheorem{example}{Example}
\newenvironment{contexample}{
	\addtocounter{example}{-1} \begin{example}[continued]}{
\end{example}}
\newtheorem{theorem}{Theorem}
\newtheorem{corollary}{Corollary}
\let\originalleft\left
\let\originalright\right
\renewcommand{\left}{\mathopen{}\mathclose\bgroup\originalleft}
\renewcommand{\right}{\aftergroup\egroup\originalright}
\m@th\displaystyle{##}$\hfil}
\m@th\displaystyle{##}$\hfil}
\newcolumntype{C}{>{\centering\arraybackslash}m{10em}}
\newcommand{\abs}[1]{\left| #1 \right|}
\definecolor{color0}{rgb}{0.12156862745098,0.466666666666667,0.705882352941177}
\definecolor{color1}{rgb}{1,0.498039215686275,0.0549019607843137}
\definecolor{color2}{rgb}{0.172549019607843,0.627450980392157,0.172549019607843}
\definecolor{color3}{rgb}{0.83921568627451,0.152941176470588,0.156862745098039}
\definecolor{color4}{rgb}{0.580392156862745,0.403921568627451,0.741176470588235}
\pgfplotsset{compat=1.12}
\title{\LARGE Equivalence between a time-fractional and an integer-order gradient flow: 
	\\ The memory effect reflected in the energy}
\author[1]{Marvin Fritz}
\ead{fritzm@ma.tum.de}
\author[1]{Ustim Khristenko}
\ead{khristen@ma.tum.de}
\author[1]{Barbara Wohlmuth}
\ead{wohlmuth@ma.tum.de}
\address[1]{Department of Mathematics, Technical University of Munich, Germany}
\begin{document}

\begin{abstract}
	 Time-fractional partial differential equations are  nonlocal in time and show an innate memory effect. In this work, we propose an augmented energy functional which includes the history of the solution. Further, we prove the equivalence of a time-fractional gradient flow problem to an integer-order one based on our new energy. This equivalence guarantees the dissipating character of the augmented energy.  The state function of the integer-order gradient flow acts on an extended domain similar to the Caffarelli--Silvestre extension for the fractional Laplacian. Additionally, we apply a numerical scheme for solving time-fractional gradient flows, which is based on kernel compressing methods.  We illustrate the behavior of the original and augmented energy in the case of the Ginzburg--Landau energy functional.
\end{abstract}

\begin{keyword}
	energy dissipation \sep time-fractional gradient flows \sep well-posedness \sep history energy \sep augmented energy \sep kernel compressing scheme \sep Cahn--Hilliard equation \sep Ginzburg--Landau energy 
	\MSC[2020]  35A01 \sep 35A02 \sep 35B38 \sep  35D30 \sep 35K25 \sep 35R11  
\end{keyword}

\maketitle

\section{Introduction}
In this work, we investigate the influence of the history on the energy functional of time-fractional gradient flows, i.e., the standard time derivative is replaced by a derivative of fractional order in the sense of Caputo. By definition, the system becomes nonlocal in time, and  the history of the state function plays a significant role in its time evolution. Recently, time-fractional partial differential equations (FPDEs) became of increasing interest. Their innate memory effect appears in many applications, e.g., in the mechanical properties of materials \cite{torvik1984appearance}, in viscoelasticity \cite{mainardi2010fractional} and -plasticity \cite{diethelm1999solution}, in image \cite{cuesta2011some} and signal processing \cite{marks1981differintegral}, in diffusion \cite{olmstead1976diffusion} and heat progression problems \cite{podlubny1995application}, in the modeling of solutes in fractured media \cite{benson2000application}, in combustion theory \cite{audounet1998threshold}, bioengineering \cite{magin2006fractional}, damping processes \cite{gaul1991damping}, and even in the modeling of love \cite{ahmad2007fractional} and happiness \cite{song2010dynamical}.

The theory of gradient flows is well-investigated in the integer case, e.g., see the book \cite{ambrosio2008gradient}  and the work \cite{otto2001geometry} regarding the analysis of the porous medium equation as a gradient flow. One of the most important properties of a gradient flow is its energy dissipation, which can be immediately derived from the variational formulation and by the chain rule.
This relation is also called the principle of steepest descent.
Typical applications are the heat equation with the underlying Dirichlet energy, and the Ginzburg--Landau energy, which results  in the well-known Cahn--Hilliard \cite{miranville2019cahn} and Allen--Cahn equation \cite{allen1972ground} depending on the choice of the underlying Hilbert space. We also mention the Fokker--Planck \cite{jordan1998variational}, and the Keller--Segel equations \cite{blanchet2013gradient}, which can be written and analyzed as gradient flows.

Some of their time-fractional counterparts have been investigated in the literature, e.g., the time-fractional gradient flows of type Allen--Cahn \cite{du2019time}, Cahn--Hilliard \cite{fritz2020time}, Keller--Segel \cite{kumar2017new}, and Fokker--Planck \cite{duong2019wasserstein} type. Up to now there is no unified theory for time-fractional gradient flows, and it is not yet known whether the dissipation of energy is fulfilled, see also the discussions in \cite{zhang2020non,liu2020fast,liao2020second,chen2019accurate}. From a straightforward testing of the variational form as in the integer order setting, one can only bound the energy by its initial state but one cannot say whether it is dissipating continuously in time. Several papers investigated the dissipation law of time-fractional phase field equations numerically and proposed weighted schemes in order to fulfill the dissipation of the discrete energy, see \cite{quan2020define,quan2020numerical,tang2019energy,ran2021implicit,zhang2020high,liang2020lattice,ji2020simple,ji2020linear}.

Our main contribution is the well-posedness of time-fractional gradient flows and the introduction of a new augmented energy, which is motivated by the memory structure of time-fractional differential equations and therefore, includes an additional term representing the history of the state function. We show that the integer-order gradient flow  corresponding to this augmented energy on an extended Hilbert space is equivalent to the original time-fractional model. Consequently, the augmented energy is monotonically decreasing in time. We note that the state function of the augmented gradient flow acts on an extended domain similar to the Caffarelli--Silvestre approach \cite{caffarelli2007extension} of the fractional Laplacian using harmonic extensions. This technique of dimension extension has also be used in the analysis of random walks \cite{molchanov1969symmetric} and embeds a long jump random walk to a space with one added dimension. %

In \cref{Sec:Pre}, we state some preliminary results on fractional derivatives and Bochner spaces. Moreover, we state and prove a theorem of well-posedness of fractional gradient flows. We state the main theorem of the equivalence of the fractional and the extended gradient flows in \cref{Sec:Main} and give a complete proof. Afterwards, we give two corollaries, one stating the consequence of energy dissipation and the other concerning the limit case $\alpha=1$ in the fractional order. In \cref{Sec:Num}, we present an algorithm to solve the time-fractional system based on rational approximations. Lastly, we illustrate in \cref{Sec:Sim} some simulations in order to show the influence of the history energy.

\section{Analytical Preliminaries and Well-Posedness of Time-Fractional Gradient Flows} \label{Sec:Pre}

In the following, let $H$ be a separable Hilbert space and $X$ a Banach space such that it holds 
\begin{gather*} X \com H  \con X', 
\end{gather*}
where the embedding $\con$ is continuous and dense, and $\com$ is additionally compact. We apply the Riesz representation theorem to identify $H$  with its dual. %
In this regard, $(X,H,X')$ forms a Gelfand triple, e.g.,
$$ \begin{aligned}
\big(H^k(\Omega), H^{k-j}(\Omega), H^{-k}(\Omega)\big) \text{ with } k\geq j>0. %
\end{aligned}$$                                         
The duality pairing in $X$ is regarded as a continuous extension of the scalar product of the Hilbert space $H$ in the sense
\begin{equation*}\label{eq:innergelfand} \begin{aligned}
	\langle u,v \rangle_{X' \times X} &= (u,v)_H &&\quad \forall u \in H, v \in X, %
\end{aligned} \end{equation*}

We call a function Bochner measurable if it can be approximated by a sequence of Banach-valued simple functions and consequently, we define the Bochner spaces $L^p(0,T;X)$ as the equivalence class of Bochner measurable functions $u:(0,T) \to X$ such that $t \mapsto \|u(t)\|_X^p$ is Lebesgue integrable. Note that $L^2(0,T;X)$ is a Hilbert space if $X$ is a Hilbert space, e.g., see \cite{diestel1977vector}. The Sobolev--Bochner space $W^{1,p}(0,T;X)$ consists of functions in $L^p(0,T;X)$ such that their distributional time derivatives are induced by functions in $L^p(0,T;X)$. 

\subsection{Fractional derivative}
Let us introduce the linear continuous Riemann--Liouville integral operator $\I_\alpha \in  \mathscr{L}(L^1(0,T;X))$ of order $\alpha \in (0,1)$ of a function $u\in L^1(0,T;X)$, defined by 
\begin{equation} \label{Def:RLintegral}
	\I_{\alpha} u := g_{\alpha} * u,
\end{equation}
where the singular kernel $g_{\alpha}\in L^1(0,T)$ is given by $g_{\alpha}(t)= t^{\alpha-1}/\Gamma(\alpha)$, and the operator $*$ denotes the convolution on the positive half-line with respect to the time variable.
Note that the operator $\I_\alpha$ has a complementary element in the sense
\begin{equation} \label{Eq:InverseKernel}
\I_\alpha \I_{1-\alpha} u =\I_1 u = 1 * u,
\end{equation}
see \cite{diethelm2010analysis}.
Then, the fractional derivative of order $\alpha \in (0,1)$ in the sense of Caputo 
is defined by
\begin{equation}\label{Eq:DefFracDer}
	\pta u  := g_{1-\alpha} * \pt u = \I_{1-\alpha} \pt u,
\end{equation}
see, e.g., \cite{diethelm2010analysis,kilbas2006theory}.
In the limit cases $\alpha=0$ and $\alpha=1$, we define $\p_t^0 u = u-u_0$ and  $\p_t^1 u = \p_t u$, respectively.
One can write~\cref{Eq:DefFracDer} as
\begin{equation*}
\p_t^\alpha u(t)= \frac{1}{\Gamma(1-\alpha)}  \int_0^t \frac{\p_s u(s)}{(t-s)^{\alpha}} \ds,
\end{equation*}
in $X$ for a.e. $t \in (0,T)$.
The infinite-dimensional valued integral is understood in the Bochner sense. We note that the Caputo derivative requires a function which is absolutely continuous. But this definition can be generalized to a larger class of functions which coincide with the classical definition in case of absolutely continuous functions, see \cite{li2018some,fritz2020time}. 

Similar to before, we define the fractional Sobolev--Bochner space $W^{\alpha,p}(0,T;X)$ as the functions in $L^p(0,T;X)$ such that their $\alpha$-th fractional time derivative is in $L^p(0,T;X)$.
 Let us remark that by \cref{Eq:InverseKernel} it follows
\begin{equation} 
	(\I_{\alpha} \p_t^\alpha u)(t) =\I_{\alpha}\I_{1-\alpha} \pt u = \I_1\p_t u(t) = u(t)-u_0.
	\label{Eq:InverseConvolution}
\end{equation}
As in the integer-order setting, there are continuous and compact embedding results \cite{ouedjedi2019galerkin,wittbold2020bounded,zacher2009weak,li2018some}. In particular, provided that $X$ is compactly embedded in $H$, it holds \begin{equation} \begin{aligned} \label{eq:aubin} 
W^{\alpha,p'}(0,T;X') \cap L^p(0,T;X) &\con C([0,T];H), \quad \frac{1}{p}+\frac{1}{p'}=1, \quad \alpha>0, \\W^{\alpha,1}(0,T;X) \cap L^p(0,T;X') &\com L^r(0,T;H), \quad 1\leq r<p,\quad \alpha>0.\end{aligned}\end{equation} 
Moreover, it holds the following version of the Gr\"onwall--Bellman inequality in the fractional setting.
\begin{lemma}[{cf. \cite[Corollary 1]{fritz2020time}}]
	\label{Lem:FractionalGronwall1}
	Let $w,v \in L^1(0,T;\R_{\geq 0})$, and $a,b\ge0$. If $w$ and $v$ satisfy the inequality
$$
		w(t)+g_{\alpha}*v(t) \leq a + b\;(g_\alpha * w)(t) \qquad \text{a.e. }  t \in (0,T),
$$
	then it holds
	$w(t)+ v(t) \leq a \cdot C(\alpha,b,T)$ for almost every $t \in (0,T)$.
\end{lemma}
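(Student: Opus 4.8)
The plan is to decouple the two unknowns, bound $w$ on its own by a scalar fractional Gr\"onwall argument, and then substitute that bound back to control the full left-hand side. First I would exploit the signs: since $v \ge 0$ and the kernel $g_\alpha$ is non-negative on $(0,T)$, the $v$-contribution on the left is non-negative and may simply be discarded, leaving the self-contained scalar inequality $w(t) \le a + b\,(g_\alpha * w)(t)$ for almost every $t \in (0,T)$, which no longer involves $v$.

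The core step is to solve this scalar inequality by iteration. Writing $K[\varphi] := b\,(g_\alpha * \varphi)$, the inequality reads $w \le a + K[w]$, and iterating it $n$ times yields $w \le a\sum_{k=0}^{n-1} K^k[1] + K^n[w]$. The semigroup identity $g_\alpha * g_\beta = g_{\alpha+\beta}$ (a Beta-function computation) together with $g_\alpha * 1 = g_{\alpha+1}$ gives $K^k[1] = b^k\,g_{k\alpha+1} = b^k\,t^{k\alpha}/\Gamma(k\alpha+1)$ and $K^n[\varphi] = b^n\,(g_{n\alpha} * \varphi)$. Hence the partial sums increase to the Mittag--Leffler function, $a\sum_{k\ge0}(b\,t^\alpha)^k/\Gamma(k\alpha+1) = a\,E_\alpha(b\,t^\alpha)$, while the remainder is controlled by Young's convolution inequality, $\|K^n[w]\|_{L^1(0,T)} \le b^n\,\|g_{n\alpha}\|_{L^1(0,T)}\,\|w\|_{L^1(0,T)}$ with $\|g_{n\alpha}\|_{L^1(0,T)} = T^{n\alpha}/\Gamma(n\alpha+1)\to 0$. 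Passing to a subsequence along which $K^n[w]\to0$ almost everywhere, I obtain $w(t) \le a\,E_\alpha(b\,t^\alpha) \le a\,E_\alpha(b\,T^\alpha) =: a\,C_1(\alpha,b,T)$ for a.e. $t$, using that $E_\alpha$ is increasing.

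Finally I would reinsert this pointwise bound into the right-hand side of the original hypothesis. Since $(g_\alpha * w)(t) \le a\,C_1\,(g_\alpha*1)(t) = a\,C_1\,t^\alpha/\Gamma(\alpha+1) \le a\,C_1\,T^\alpha/\Gamma(\alpha+1)$ on $(0,T)$, the entire left-hand side $w(t)+v(t)$ is bounded by $a + b\,a\,C_1\,T^\alpha/\Gamma(\alpha+1) = a\,C(\alpha,b,T)$ with $C := 1 + b\,C_1\,T^\alpha/\Gamma(\alpha+1)$, which is exactly the asserted estimate. The main obstacle is the scalar step: one must justify the termwise convergence of the iterated kernels to $E_\alpha$ and the vanishing of the Neumann remainder in $L^1$, as this is where the constant $C(\alpha,b,T)$ is produced and its independence of $w$ and $v$ is guaranteed; everything else is elementary manipulation of non-negative quantities, and the degenerate cases $b=0$ (giving $w+v\le a$) and $a=0$ (forcing $w=v=0$, since then $g_\alpha*v\le0$ with $v\ge0$) fall out immediately.
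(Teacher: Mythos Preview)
The paper does not prove this lemma; it is quoted from an external reference, so there is no in-paper argument to compare against. Your iteration argument for the scalar inequality $w\le a+b\,(g_\alpha*w)$ is correct and standard, and it cleanly yields $w(t)\le a\,E_\alpha(b\,t^\alpha)\le a\,E_\alpha(b\,T^\alpha)$.

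The gap is in your final paragraph. After bounding $g_\alpha*w$ and reinserting into the hypothesis, what you actually control is the left-hand side of the \emph{hypothesis}, namely $w(t)+(g_\alpha*v)(t)$, not $w(t)+v(t)$. You wrote ``the entire left-hand side $w(t)+v(t)$'', but the hypothesis never contains $v$ alone, only $g_\alpha*v$, and a uniform bound on $g_\alpha*v$ does not give a pointwise a.e.\ bound on $v$: for instance $v(s)=s^{-\beta}$ with $0<\beta<\alpha$ is unbounded near zero while $(g_\alpha*v)(t)=c\,t^{\alpha-\beta}$ is bounded on $(0,T)$. The printed conclusion is therefore very likely a typo for $w(t)+(g_\alpha*v)(t)\le a\,C(\alpha,b,T)$ (equivalently, via \cref{Ineq:Aux}, a bound on $w(t)+\int_0^t v$). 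This is exactly how the lemma is used in the proof of \cref{th:prelim:Existence}: from \cref{Eq:PreGron} the authors extract bounds on $\|u_k\|_{L^\infty(0,T;H)}^2$ and $\|u_k\|_{L^p(0,T;X)}^p$, i.e.\ on $\sup w$ and on $\int_0^T v$, never a pointwise bound on $v$. Your argument proves precisely that statement; the slip is only in the last identification.
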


We mention the following lemma which provides an alternative to the classical chain rule $\ddt f(u) = f'(u) \ddt u$ to the fractional setting for $\lambda$-convex (or semiconvex) functionals $f:X \to \R$ with respect to $H$, i.e., $x \mapsto f(x)-\frac{\lambda}{2}\|x\|_H^2$ is convex for some $\lambda\in \R$. 
If $f$ is twice differentiable and $\lambda=-1$, then semiconvexity implies $f''(x) \geq -1$ which is also called dissipation property of $f'$. The result of the fractional chain inequality for the quadratic function $f(u)=\frac12 \|u\|_H^2$ is well-known in case of $u \in H^1(0,T;H)$, see \cite[Theorem 2.1]{vergara2008lyapunov}, saying
\begin{equation} \label{Eq:ChainSquared} \frac12 \pta \|u\|_H^2 \leq (\pta u,u)_H \quad \forall u \in H^1(0,T;H).
\end{equation}
It has been generalized to convex functionals $f$ in \cite[Proposition 2.18]{li2018some} in the form of inequality
$$\pta f(u) \leq \langle f'(u), \pta u\rangle_{X' \times X} \quad \forall u \in C^1([0,T);X),$$
and applying it to the convex functional $x \mapsto f(x)-\frac{\lambda}{2}\|x\|_H^2$ directly gives the following result for semiconvex functionals.

\begin{lemma}\label{Lem:Chain} Let $H$ be a Hilbert space, $X \con H$ a Banach space, and
	 $u \mapsto f(u) \in \R$ a Fr\'echet differentiable functional on $X$. If $f$ is $\lambda$-convex for some $\lambda\in \R$ with respect to $H$, then it holds 
	$$\pta f(u) \leq \langle  f'(u),\pta u \rangle_{X'\times X} +  \frac{\lambda}{2} \pta \|u\|_H^2 -\lambda \langle \pta u,u \rangle_{X' \times X} \qquad \forall u \in C^1([0,T);X). $$
\end{lemma}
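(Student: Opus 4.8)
The plan is to deduce the inequality from the convex chain inequality $\pta f(u)\le\langle f'(u),\pta u\rangle_{X'\times X}$ recalled just above the statement (valid for convex Fréchet differentiable functionals and $u\in C^1([0,T);X)$), applied not to $f$ itself but to the auxiliary functional $g:X\to\R$, $g(x):=f(x)-\frac{\lambda}{2}\|x\|_H^2$. By the very definition of $\lambda$-convexity of $f$ with respect to $H$, the functional $g$ is convex on $X$, so the only thing to check before invoking the cited result is that $g$ is Fréchet differentiable on $X$, and then the conclusion follows by expanding the two sides and rearranging.

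First I would verify the differentiability of $g$. Since $f$ is Fréchet differentiable by hypothesis, it suffices to treat the quadratic term $q(x):=\frac12\|x\|_H^2$. Writing $\iota:X\con H$ for the (bounded, linear, dense) embedding, one has $q=r\circ\iota$ with $r(y):=\frac12\|y\|_H^2$ on $H$, and $r$ is Fréchet differentiable with $r'(y)=(y,\cdot)_H$; the chain rule then gives that $q$ is Fréchet differentiable on $X$, and the Gelfand-triple identification of $(x,\cdot)_H$ with the canonical image of $x$ in $X'$ yields $\langle q'(x),\xi\rangle_{X'\times X}=(x,\xi)_H$ for every $\xi\in X$. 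Consequently $\langle g'(u),\xi\rangle_{X'\times X}=\langle f'(u),\xi\rangle_{X'\times X}-\lambda(u,\xi)_H$.

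Next, for a fixed $u\in C^1([0,T);X)$ the chain rule for Fréchet differentiable maps shows that $t\mapsto f(u(t))$ and $t\mapsto\|u(t)\|_H^2$ are both $C^1$ on $[0,T)$, so the Caputo derivative $\pta=\I_{1-\alpha}\pt$, being linear, distributes over $g(u)=f(u)-\frac{\lambda}{2}\|u\|_H^2$. Applying the convex inequality to $g$ and inserting the formula for $g'$ from the previous step yields
$$\pta f(u)-\frac{\lambda}{2}\pta\|u\|_H^2=\pta g(u)\le\langle g'(u),\pta u\rangle_{X'\times X}=\langle f'(u),\pta u\rangle_{X'\times X}-\lambda\,(u,\pta u)_H,$$
and since $u(t)$ and $\pta u(t)$ both lie in $X\subset H$ for a.e.\ $t$, one has $(u,\pta u)_H=\langle\pta u,u\rangle_{X'\times X}$. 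Moving the quadratic term to the right-hand side gives exactly the claimed inequality.

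The only point that requires genuine care is the differentiability step — the Fréchet differentiability of the quadratic penalty on $X$ and the correct identification of $q'(x)$ inside $X'$ — but even there the continuity of $X\con H$ does all the work, so I expect no serious obstacle; the substance of the lemma is inherited wholesale from the convex case of \cite{li2018some}, which is precisely why the authors phrase it as a direct consequence.
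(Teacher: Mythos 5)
Your proposal is correct and follows exactly the paper's own route: the authors also obtain the lemma by applying the convex chain inequality of \cite[Proposition 2.18]{li2018some} to the convex functional $x \mapsto f(x)-\frac{\lambda}{2}\|x\|_H^2$ and rearranging. You merely make explicit the routine details (Fr\'echet differentiability of the quadratic term on $X$ and the Gelfand-triple identification $(u,\pta u)_H=\langle \pta u,u\rangle_{X'\times X}$) that the paper leaves implicit.
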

Wwe note that in the discrete setting the required regularity $u \in C^1([0,T);X)$ is often satisfied. However, this regularity is not necessarily available for weak solutions. Here, we refer to \cite[Proposition 1]{fritz2020time} for the convolved version
\begin{equation} \label{Eq:ChainIneq} f(u(t))- f(u_0) \leq \big(g_{\alpha}*\langle f'(u),\pta u\rangle_{X' \times X}\big)(t)   + \frac{\lambda}{2} \big(\|u\|^2_{H}-\|u_0\|^2_{H}\big)- \lambda \big( g_{\alpha}*\langle \p_t^\alpha u,u \rangle_{X'\times X}\big)(t), \end{equation} 
for a.e. $t \in (0,T)$ which requires  $u \in W^{\alpha,p'}(0,T;X) \cap L^p(0,T;X)$, $u_0 \in H$, and $f'(u) \in L^{p'}(0,T;X')$.

\subsection{Time-fractional gradient flows in Hilbert spaces} \label{Sec:TFGF}
In this work, we focus on the time-fractional gradient flow in the Hilbert space $H$, defined as the variational problem
 \begin{equation} \label{Eq:GradientVariational}
\pta (u,v)_H + \delta \E(u,v)=0 \quad \forall v \in X,
 \end{equation}
for a given nonlinear energy functional~$\E:X \to \R$, where $\delta \E:X \times X \to \R$ denotes its G\^ateaux derivative:
\begin{equation*}
\delta\E(u,v) 
:= \lim\limits_{h\to 0}\frac{\E(u+hv)-\E(u)}{h}
\qquad \forall u,v \in X.
\end{equation*}
We also define the gradient of $\E$ in the Hilbert space~$H$ as $\gradH\E:X \to X'$ such that at $u\in X$ it holds
\begin{equation*}\label{key}
\langle \gradH\E(u),v \rangle_{X' \times X} = \delta \E(u,v)
\qquad \forall v \in X.
\end{equation*}
Then,~\eqref{Eq:GradientVariational} can be equivalently written as $\pta u = - \gradH\E(u)$ in $X'$ or
\begin{equation*} \label{Eq:GradientVariational2} 
\langle \pta u + \gradH\E(u),\, v \rangle_{X'\times X} =0 \quad \forall v \in X.
\end{equation*}
Moreover, we equip this variational problem with the initial data $u_0 \in H$. In general, we do not have $u \in C([0,T];H)$ and therefore, we do not have $u(t) \to u_0$ in $H$ as $t \to 0$.  Instead, we are going to prove $g_{1-\alpha} * (u-u_0) \in C([0,T];H)$ and the initial data is satisfied in the sense $g_{1-\alpha} * (u-u_0)(t)\to 0$ in $H$ as $t\to 0$. Moreover due to the Sobolev embedding theorem, we can actually prove $u-u_0 \in C([0,T];X')$ for $\alpha>1/p$. Consequently, for such values of $\alpha$ the initial is satisfied in the sense $u(t) \to u_0$ in $X'$.

 \begin{example}\label{Ex:1}
 	We consider the energy functional
 	\begin{equation}\label{eq:Energy}
 	\E(u)=\int_\Omega \big( f(u(x)) + \frac12 |\nabla u(x)|^2 \big) \dx,
 	\end{equation} 
 	for some $f \in C^1(\R;\R_{\geq 0})$. Choosing the double-well function $f(u)=(1-u^2)^2$, the energy corresponds to the Ginzburg--Landau energy \cite{ginzburg1963frictional} with $\eps=1$, and selecting $f(u)=0$ reduces to the Dirichlet energy. In order to justify the well-definedness of the second term of the integral, we require $X \subset H^1(\Omega)$. In case of the double-well function, we additionally require $X \subset H^1(\Omega)\cap L^4(\Omega)$.
 	
 	Let us consider the Sobolev space with zero mean
 	\begin{equation*}
 	\dot H^1(\Omega)=\{u \in H^1(\Omega): (u,1)_{L^2(\Omega)}=0\},
 	\end{equation*}
 	equipped with the scalar product $(\nabla \cdot, \nabla \cdot)_{L^2(\Omega)}$, which is equivalent to the inherited one on $H^1(\Omega)$ by the Poincar\'e inequality \cite{evans2010partial}. Moreover, we equip its dual space $\dot H^{-1}(\Omega)=(\dot H^1(\Omega))'$ with the graph norm $\|\nabla (-\Delta)^{-1} \cdot\|_{L^2(\Omega)}$, which is equivalent to the standard dual norm, see \cite[Remark 2.7]{miranville2019cahn}. Here, homogeneous Neumann boundary conditions are associated with the Laplace operator.
 	
 	Then, the G\^ateaux derivative of the energy functional~\eqref{eq:Energy} can be written using scalar products of the Hilbert spaces $H \in \{\dot H^1(\Omega), L^2(\Omega),  \dot H^{-1}(\Omega)\}$ as follows:
 	\begin{equation*} \label{Eq:Gateaux}
		\delta \E(u,v)=((-\Delta)^{-1} f'(u)+u,v)_{\dot H^1(\Omega)}=(f'(u)-\Delta u,v)_{L^2(\Omega)}=(-\Delta f'(u)+\Delta^2 u,v)_{\dot H^{-1}(\Omega)} 
	\end{equation*}	
	for all $u,v \in X$, assuming that $X$ is regular enough.
	In particular, if $X=C_c^\infty(\Omega)$, the strong form of~\eqref{Eq:GradientVariational} results in the respective gradient flows:  
 	\begin{align*}
 	\pta u &=\Delta^{-1} f'(u) - u, &&\hspace{-12em} \text{ when }H=H^1(\Omega),  \\
 	\pta u &= \Delta u - f'(u), &&\hspace{-12em}\text{ when }H=L^2(\Omega),  \\
 	\pta u &= \Delta( f'(u) -\Delta u), &&\hspace{-12em}\text{ when }H=\dot H^{-1}(\Omega).
 	\end{align*}
 	In the case of $f=0$, it results in a fractional ODE, in the fractional heat equation, and in the fractional biharmonic equation, respectively. For a double-well potential it yields the Allen--Cahn equation in case of $H=L^2(\Omega)$ and the Cahn--Hilliard equation for $H=\dot H^{-1}(\Omega)$.%
\end{example}

 \subsection{Well-posedness of time-fractional gradient flows} We provide the following proposition which yields the existence of variational solutions to time-fractional gradient flows.  In order to show uniqueness and continuous dependence on the data, we have to assume that $\E$ is additionally semiconvex, see \cref{th:prelim:Existence2} below.
 In \cite[Chapter 5]{vergara2006convergence}, similar results are proven for the exponential kernel $k_{1-\alpha}(t)=C e^{-\omega t} t^{-\alpha}$ with some constants $C,\omega>0$.  
 
 \begin{theorem}\label{th:prelim:Existence}
 	Let  $X$ be a separable, reflexive Banach space that is compactly embedded in the separable Hilbert space $H$. Further, let $u_0 \in H$, $\E \in C^1(X;\R)$,  and $\delta \E:X \times X \to \R$ be bounded and semicoercive in the sense %
\begin{align}
 	\|\gradH\E(u)\|_{X'} &\leq C_0 \|u\|_X^{p-1} + C_0, 
 	\label{eq:continuity}\\
 	\delta \E(u,u) &\geq C_1 \|u\|_X^p - C_2 \|u\|_H^2,
 	\label{eq:coercivity}
\end{align}
	for all $u \in X$ for some positive constants $C_0,C_1,C_2<\infty$ and $p>1$.  Moreover, we assume that the realization of $\gradH\E$ as an operator from $L^p(0,T;X)$ to $L^{p'}(0,T;X')$ is weak-to-weak continuous.
 	Then the time-fractional gradient flow $\pta u = - \gradH\E(u)$ with $\alpha \in (0,1]$ admits a variational solution in the sense that 
 	 	 \begin{equation*} \begin{aligned}
 	u &\in L^p(0,T;X) \cap L^\infty(0,T;H) \cap W^{\alpha,p'}(0,T;X'),  \\  g_{1-\alpha} * (u-u_0) &\in C([0,T];H),\end{aligned}\end{equation*} 
 	fulfills the variational form
 	\begin{equation*}
 	 \pta (u,v)_H + \delta \E(u,v) =0 \quad \forall v \in X,
 	\end{equation*}
and the energy inequality
\begin{equation} \label{Eq:EnergyFinal2}
	 \|u\|_{L^p(0,T;X)}^p + \|u\|_{L^\infty(0,T;H)}^2  + \|\gradH\E(u)\|_{L^{p'}(0,T;X')}^{p'} \leq C(T) \cdot \big(1+\|u_0\|_H^2\big). 
\end{equation}
 \end{theorem}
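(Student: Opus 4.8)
The plan is to use a Faedo–Galerkin approximation in space combined with the fractional theory developed in the preliminaries. I'll pick a Galerkin basis $\{w_k\}_{k\in\N}$ of $X$ that is orthonormal in $H$ (possible since $X\con H$ densely and $X$ is separable), set $V_m=\mathrm{span}\{w_1,\dots,w_m\}$, and write $u_m(t)=\sum_{k=1}^m c_k^m(t)w_k$ with $u_m(0)=P_m u_0$ the $H$-orthogonal projection. On $V_m$ the variational identity $\pta(u_m,w_k)_H+\delta\E(u_m,w_k)=0$ becomes a system of fractional ODEs $\pta c^m(t)=-F(c^m(t))$ for a continuous (in fact weak-to-weak continuous, hence in finite dimensions continuous) nonlinearity $F$; local-in-time solvability follows from a fractional Picard–Lindelöf/Schauder argument (or by citing standard results on Caputo systems, e.g. \cite{diethelm2010analysis}), and the a priori bound below will upgrade this to a global solution on $[0,T]$.

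The heart of the argument is the energy estimate. Testing with $v=u_m$ gives $\pta(u_m,u_m)_H+\delta\E(u_m,u_m)=0$; by the fractional chain inequality \cref{Eq:ChainSquared}, $\tfrac12\pta\|u_m\|_H^2\le(\pta u_m,u_m)_H$, so using semicoercivity \cref{eq:coercivity},
$$
\tfrac12\pta\|u_m(t)\|_H^2 + C_1\|u_m(t)\|_X^p \le C_2\|u_m(t)\|_H^2 .
$$
Applying $\I_\alpha=g_\alpha*\,\cdot\,$ to this inequality and using \cref{Eq:InverseConvolution}, namely $\I_\alpha\pta(\tfrac12\|u_m\|_H^2)=\tfrac12(\|u_m(t)\|_H^2-\|u_m(0)\|_H^2)$, together with $\|u_m(0)\|_H\le\|u_0\|_H$, yields
$$
\tfrac12\|u_m(t)\|_H^2 + C_1\,(g_\alpha*\|u_m\|_X^p)(t) \le \tfrac12\|u_0\|_H^2 + C_2\,(g_\alpha*\|u_m\|_H^2)(t).
$$
Now the fractional Grönwall–Bellman inequality \cref{Lem:FractionalGronwall1} (with $w=\tfrac12\|u_m\|_H^2$, $v=C_1\|u_m\|_X^p$, $a=\tfrac12\|u_0\|_H^2$, $b=2C_2$) gives $\|u_m(t)\|_H^2+\|u_m\|_X^p$ bounded by $C(\alpha,C_2,T)(1+\|u_0\|_H^2)$ for a.e.\ $t$, hence uniform bounds for $u_m$ in $L^\infty(0,T;H)\cap L^p(0,T;X)$. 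The continuity/growth bound \cref{eq:continuity} then controls $\gradH\E(u_m)=\pta u_m$ in $L^{p'}(0,T;X')$, which in turn bounds $\pta u_m$ and hence $u_m$ in $W^{\alpha,p'}(0,T;X')$; in particular $g_{1-\alpha}*(u_m-u_m(0))=\I_{1-\alpha}(u_m-u_m(0))=\I_1\pta u_m$ is controlled in, say, $W^{1,p'}(0,T;X')\hookrightarrow C([0,T];X')$, and by \cref{eq:aubin} also in $C([0,T];H)$.

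The final step is passage to the limit. Extract a subsequence with $u_m\weak u$ in $L^p(0,T;X)$, $u_m\weak^* u$ in $L^\infty(0,T;H)$, $\pta u_m\weak \chi$ in $L^{p'}(0,T;X')$, and — by the compact embedding in \cref{eq:aubin} — $u_m\to u$ strongly in $L^r(0,T;H)$ for $r<p$. Linearity and continuity of $\I_{1-\alpha}$ and $\pt$ identify $\chi=\pta u$; the assumed weak-to-weak continuity of $\gradH\E:L^p(0,T;X)\to L^{p'}(0,T;X')$ passes the nonlinear term $\gradH\E(u_m)\weak\gradH\E(u)$; for fixed test function $v\in V_{m_0}$ and $\varphi\in C_c^\infty(0,T)$ one integrates the weak form against $\varphi$, passes to the limit using that $\I_{1-\alpha}^*$ acts on the smooth test function, and recovers $\pta(u,v)_H+\delta\E(u,v)=0$ for all $v\in\bigcup_m V_m$, hence by density for all $v\in X$. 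The attainment of the initial datum in the stated sense $g_{1-\alpha}*(u-u_0)\in C([0,T];H)$ with value $0$ at $t=0$ follows from the uniform $C([0,T];H)$ bound on $g_{1-\alpha}*(u_m-u_m(0))$ together with $u_m(0)\to u_0$ in $H$. Finally, the energy inequality \cref{Eq:EnergyFinal2} passes to the limit by weak lower semicontinuity of the norms.

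I expect the main obstacle to be the rigorous handling of the nonlinear term in the limit: strong $L^r(0,T;H)$ convergence with $r<p$ is in general too weak to identify $\gradH\E(u)$ directly, which is precisely why the hypothesis that $\gradH\E$ is weak-to-weak continuous from $L^p(0,T;X)$ to $L^{p'}(0,T;X')$ has been imposed — once that is granted, the limit passage is routine. A secondary technical point worth care is the justification of the discrete fractional chain inequality \cref{Eq:ChainSquared} for the Galerkin solutions, which requires $u_m\in H^1(0,T;H)$; this holds because each $c_k^m$ solves a Caputo ODE with a continuous (indeed locally Lipschitz, after the a priori bound) right-hand side, so $c_k^m$ is $C^1$ on $[0,T]$.
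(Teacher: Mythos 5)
Your proposal is correct and follows essentially the same route as the paper: Faedo--Galerkin discretization, testing with $u_m$ plus the fractional chain inequality \cref{Eq:ChainSquared} and semicoercivity, convolution with $g_\alpha$ and the fractional Gr\"onwall lemma \cref{Lem:FractionalGronwall1}, the growth bound \cref{eq:continuity} for the dual estimate on $\pta u_m$, weak compactness together with the fractional Aubin--Lions embedding \cref{eq:aubin}, and the assumed weak-to-weak continuity of $\gradH\E$ to identify the nonlinear limit. You also correctly isolate the two delicate points (identification of the nonlinear term and the regularity needed for the discrete chain inequality), which the paper handles in the same way.
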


\begin{proof}
We employ the Faedo--Galerkin method \cite{lions1969some} to reduce the time-fractional PDE to an fractional ODE, which admits a solution $u_k$ due to the well-studied theory given in \cite{diethelm2010analysis,kilbas2006theory}. We derive energy estimates, which imply the existence of weakly convergent subsequences by the Eberlein--\v{S}mulian theorem \cite{brezis2010functional}. We pass to the limit $k\to \infty$ and apply compactness methods to return to the variational form of the time-fractional gradient flow. Recently, the Faedo--Galerkin method has been applied to various time-fractional PDEs, see, e.g., \cite{li2018some,fritz2020subdiffusive,djilali2018galerkin,fritz2020time,kubica}.
\medskip 

\noindent \textit{Discrete approximation.}
Since $X$ is a separable Banach space, there is a finite-dimensional dense subspace of $X$, called $X_k$, which is spanned by the $k \in \mathbb{N}$ elements $x_1,\dots,x_k \in X$. We consider the equation 
\begin{equation} (\pta u_k, v_k )_H + \delta \E(u_k,v_k) = 0 \quad \forall v_k \in X_k. \label{Eq:Faedo}
\end{equation}
Hence, we are looking for a function $u_k=\sum_{j=1}^k u_k^j x_j$ such that the fractional differential equation system
\begin{equation*}
\sum_{j=1}^k \pta u_k^j (x_j, x_n)_H + \big\langle \gradH\E(\text{$\textstyle\sum_{j=1}^k u_k^j x_j$}), x_n\big\rangle_{X' \times X}=0 \quad \forall n \in \{1,\dots,k\},
\end{equation*}
with initial data $u_k(0)=\Pi_k u_0$ is fulfilled. Here, $\Pi_k:X \to X_k$ denotes the orthogonal projection onto $X_k$. This fractional ODE admits an absolutely continuous solution vector $(u_k^1,\dots,u_k^k)$ since $u \mapsto \E(u)$ is assumed to be continuously differentiable, see \cite{diethelm2010analysis}. Therefore, the local existence of $u_k \in AC([0,T_k];X_k)$ is ensured. If we can derive an uniform bound on $u_k$, we can extend the time interval by setting $T_k=T$. Moreover, if $\gradH$ is assumed to be Lipschitz continuous, then we can argue by a blow-up alternative to achieve global well-posedness with $T=\infty$, see the discussion in \cite{djilali2018galerkin}.%
\medskip 

\noindent \textit{Energy estimates.} Taking the test function $v_k=u_k$ in \cref{Eq:Faedo} gives %
\begin{equation*}
(\pta u_k,u_k)_H + \delta \E(u_k,u_k)= 0.
\end{equation*}
Applying the fractional chain inequality $\frac12\pta  \|u_k\|_H^2 \leq (\pta u_k,u_k)_H$, see \cref{Eq:ChainSquared}, and the semicoercivity of $\delta\E$, see \cref{eq:coercivity}, gives the estimate
\begin{equation*}
\frac12 \pta \|u_k\|_H^2 + C_1\|u_k\|_X^p  \leq C_2\|u_k\|_H^2.
\end{equation*}
Taking the convolution with $g_\alpha$ of this estimate yields with the identity \cref{Eq:InverseConvolution}
\begin{equation} \label{Eq:PreGron}
\| u_k(t)\|_H^2 + C_1 \big(g_\alpha * \|u_k\|_X^p\big)(t)  \leq \|u_k(0)\|_H^2 + C_2 \big(g_\alpha * \|u_k\|_H^2\big)(t).
\end{equation}
We apply the inequality 
\begin{equation} \label{Ineq:Aux} \int_0^{t} \|u_k(s)\|_X^p \ds \leq T^{1-\alpha} \int_0^{t}  (t-s)^{\alpha-1} \|u_k(s)\|_X^p \ds,
	\end{equation}
and the fractional Gr\"onwall--Bellman inequality, see \cref{Lem:FractionalGronwall1}, to \cref{Eq:PreGron}, and find
\begin{equation} \label{Eq:BoundGron}
\|u_k\|_{L^\infty(0,T_k;H)}^2  +\|u_k\|_{L^p(0,T_k;X)}^p \leq C(T) \| u_0\|_H^2. 
\end{equation}

This gives the uniform boundedness of the sequence $u_k$ in the spaces $L^\infty(0,T_k;H)$ and $L^p(0,T_k;X)$. Therefore, we can extend the time interval by setting $T_k=T$. The boundedness assumption of $\delta\E$, see \cref{eq:continuity}, immediately  gives the uniform bound 
\begin{equation} \label{Eq:BoundGron2}
	\|\gradH \E(u_k)\|_{L^{p'}(0,T;X')}^{p'} \leq C(T) + C \int_0^T  \|u_k(t)\|_X^{p'(p-1)} \dt \leq C(T) \cdot \big(1+\|u_0\|_H^2\big).
\end{equation}

\noindent \textit{Estimate on the fractional time-derivative.}
Taking an arbitrary function $v \in L^p(0,T;H)$ and testing with its projection onto $X_k$ in \cref{Eq:Faedo} gives a bound of $\pta u_k$ in $L^{p'}(0,T;X')$ due to the boundedness assumption of $\E$. Indeed, let $v\in L^p(0,T;X)$ and denote $\Pi_k v = \sum_{j=1}^k v_j^k x_j$ for time-dependent coefficient functions $v_j^k : (0,T) \to \R$, $j \in \{1,\dots,k\}$. We multiply equation \cref{Eq:Faedo} by $v_j^k$, take the sum from $j=1$ to $k$, and integrate over the interval $(0,T)$, giving
\begin{equation*}
\begin{aligned}\left|\int_0^T (\p_t^\alpha u_k, v)_H\dt\right| &= \left|\int_0^T \delta \E(u_k,\Pi_k v) \dt\right|  \leq \|\delta \E(u_k,\cdot)\|_{L^{p'}(0,T;X')}  \|v\|_{L^p(0,T;X)},
\end{aligned}
\end{equation*}
where we used that $(\pta u_k,v)_H=(\pta u_k, \Pi_k v)_H$ due to the invariance of the time derivative under the adjoint of the projection operator.
Therefore, we have 
\begin{equation} \label{Eq:BoundTime}
\| \p_t^\alpha u_k \|_{L^{p'}(0,T;X')} = \sup_{\|\varphi \|_{L^{p}(0,T;X)} \leq 1} \Big| \int_0^T \langle \pta u_k,\varphi  \rangle_{X' \times X} \dt \Big|  \leq C(T) \cdot \big(1+\|u_0\|_H^2\big).
\end{equation}

\noindent \textit{Limit process.} The bounds \cref{Eq:BoundGron}--\cref{Eq:BoundTime} give the energy inequality
\begin{equation} \label{Eq:BoundEnergy}
\|u_k\|_{L^\infty(0,T;H)}^2  +\|u_k\|_{L^p(0,T;X)}^p + \|\pta u_k\|_{L^{p'}(0,T;X')}^{p'} +  \|\gradH\E(u_k)\|_{L^{p'}(0,T;X')}^{p'} \leq C(T) \cdot \big(1+\|u_0\|_H^2\big),
\end{equation}
which implies the existence of weakly/weakly-$*$ convergent subsequenes by the Eberlein--\v{S}mulian theorem \cite{brezis2010functional}. By a standard abuse of notation, we drop the subsequence index. Hence, there exist limit functions $u$ and $\xi$ such that for $k \to \infty$
\begin{equation} \label{Eq:Weak} \begin{aligned}	
u_k &\longweak u &&\text{weakly-$*$ in } L^\infty(0,T;H), \\
u_k &\longweak u &&\text{weakly\phantom{-*} in } L^p(0,T;X), \\
\p_t^\alpha  u_k &\longweak \p_t^\alpha  u &&\text{weakly\phantom{-*} in } L^{p'}(0,T;X'), \\
u_k &\longrightarrow u &&\text{strongly\hspace{1mm} in }
L^{p'}(0,T;H), \\
\gradH \E(u_k) &\longweak \xi &&\text{weakly\phantom{-*} in } L^{p'}(0,T;X'). %
\end{aligned} \end{equation} 
Here, we applied the fractional Aubin--Lions compactness lemma \cref{eq:aubin} to achieve the strong convergence of $u_k$ in $L^{p'}(0,T;H)$. Moreover, we concluded that the weak limit of $\p_t^\alpha u_k$  is equal to  $\p_t^\alpha u$, see \cite[Proposition 3.5]{li2018some}. %

In the last step we take the limit $k\to \infty$ in the variational form and use that $\Pi_k X_k$ is dense in $X$. By multiplying the Faedo--Galerkin system \cref{Eq:Faedo} by a test function $\eta \in C^\infty_c(0,T)$ and integrating over the time interval $(0,T)$, we find
\begin{equation} \begin{aligned}
\int_0^T \langle \p_t^\alpha u_k,x_j\rangle_{X' \times X} \eta(t) \dt   + \int_0^T \delta \E(u_k,x_j) \eta(t) \dt =0.
\end{aligned} \label{Eq:FaedoTime}
\end{equation}
for all $j \in \{1,\dots,k\}$. We pass to the limit $k \to \infty$ and note that the functional
\begin{equation*}u_k \mapsto  \int_0^T \langle \pta u_k, x_j\rangle_{X' \times X} \eta(t) \dt, \end{equation*}
is linear and continuous on $L^{p'}(0,T;X')$, since we have by the H\"older inequality
\begin{equation*}\left| \int_0^T \langle\pta u_k, x_j\rangle_{X' \times X} \eta(t) \dt \right| \leq  \|\pta u_k\|_{L^{p'}(0,T;X')} \|x_j\|_X \|\eta\|_{L^p(0,T)}. \end{equation*}
The weak convergence \cref{Eq:Weak} gives by definition as $k \to \infty$
\begin{equation*} \int_0^T \langle \pta u_k,x_j \rangle_{X' \times X} \eta(t) \dt \longrightarrow  \int_0^T \langle \pta u,x_j \rangle_{X' \times X} \eta(t) \dt. \end{equation*}

It remains to treat the integral involving the energy term. Since the realization $\gradH\E :L^p(0,T;X) \to L^{p'}(0,T;X')$ is weak-to-weak continuous by assumption, we have by the weak convergence $u_k \weak u$ in $L^p(0,T;X)$ also $\gradH \E(u_k) \weak \gradH\E(u)$ weakly in $L^{p'}(0,T;X')$ and therefore, $\xi=\gradH\E(u)$ in $L^{p'}(0,T;X')$.
Applying this weak convergence to the second term in \cref{Eq:FaedoTime} completes the limit process. Indeed, taking $k \to \infty$ in \cref{Eq:FaedoTime} and using the density of $\cup_k X_k$ in $X$, we get
\begin{equation*} \begin{aligned}
\int_0^T \langle\p_t^\alpha u,v\rangle_{X' \times X} \eta(t) \dt  + \int_0^T \delta \E(u,v) \eta(t) \dt =0, \qquad \forall v \in X,\, \eta \in C_c^\infty(0,T).
\end{aligned} 
\end{equation*}
Applying the fundamental lemma of calculus of variations, we finally find
\begin{equation*} \begin{aligned}
\p_t^\alpha (u,v)_H +\delta \E(u,v)  =0, \qquad \forall v \in X.
\end{aligned} 
\end{equation*}

\noindent \textit{Initial condition.} From the estimate above, we have $\p_t^\alpha u_k\in L^{p'}(0,T;X')$. The definition of the fractional derivative then yields
\begin{equation*} 
g_{1-\alpha}*u_k\in L^p(0,T;X) \cap W^{1,p'}(0,T;X') \hookrightarrow C^0([0,T];H),
\end{equation*} see the embedding \cref{eq:aubin}. Therefore, it holds $g_{1-\alpha}*(u_k-u_0) \in C^0([0,T];H)$ and  the given initial $u_0$ is satisfied in the sense 
$g_{1-\alpha}*(u_k-u_0) \to 0$ in $H$ as $t\to 0$. %
\medskip 

\noindent \textit{Energy inequality.} We prove that the solution $u$ satisfies the energy inequality. First, we note that norms are weakly/weakly-$*$ lower semicontinuous, e.g., we have $u_k \weak u$ in $L^p(0,T;X)$ and therefore, we infer
$$\| u \|_{L^p(0,T;X)} \leq \liminf_{k \to \infty} \|u_k\|_{L^p(0;T;X)}.$$  
Hence, from the discrete energy inequality \cref{Eq:BoundEnergy} and the weak convergence
\begin{equation*} \label{Eq:BoundEnergyCont}
\|u\|_{L^\infty(0,T;H)}^2  +\|u\|_{L^p(0,T;X)}^p + \|\pta u\|_{L^{p'}(0,T;X')}^{p'} +  \|\gradH\E(u)\|_{L^{p'}(0,T;X')}^{p'} \leq C(T) \cdot \big(1+\|u_0\|_H^2\big). \qedhere
\end{equation*} 
\end{proof}

\begin{remark}
	We note that we assumed the weak-to-weak continuity of the realization of $u \mapsto \gradH\E(u)$ in order to pass the limit and follow $\gradH\E(u)=\xi$. Alternatively, one could assume use the theory of monotone operators and assume that $\gradH \E$ is of type M, see \cite[Definition, p.38]{Showalter}. In fact, setting $\mathcal{X}=L^p(0,T;X)$ we call $\gradH\E$ of type M if $u_k \rightharpoonup u$ in $\mathcal{X}$, $\gradH\E(u_k) \rightharpoonup \xi$ in $\X'$ and $$\limsup_{k \to \infty} \delta \E(u_k,u_k) \leq \langle\xi,u\rangle_{\X' \times \X},$$ then $\gradH\E(u)=\xi$ in $\X'$. The condition on the limit superior can be followed by the weakly lower semicontinuity of norms.
	
	We note that the energy functional from \cref{Ex:1} fulfills the assumptions from \cref{th:prelim:Existence} for $p=2$ if the function $f$ satisfies some growth bounds, e.g., $f(x) \leq C_1(1+|x|^2)$ and $|f'(x)| \leq C_2 (1+|x|)$ for all $x \in \mathbb{R}$. In fact, the Lebesgue dominated convergence theorem gives as $k \to \infty$
	$$\int_0^T \int_\Omega f(u_k(t,x)) x_j(x) \eta(t) \dx \dt \to \int_0^T \int_\Omega f(u(t,x)) x_j(x) \eta(t) \dx \dt,$$
	which proves $\xi=\gradH$ for this choice of energy.
\end{remark}

In the following corollary, we prove the continuous dependency on the data and the uniqueness of the variational solution to \cref{th:prelim:Existence} under additional assumptions. In particular, we assume higher regularity of the initial data and the $\lambda$-convexity of the energy in order to apply the fractional chain inequality. In case of the Cahn--Hilliard equation and the double-well potential $f(x)=(1-x^2)^2$, it holds $f''(x)=12x^2-4 \geq -4$. Consequently, the function $x \mapsto f(x)+4x^2$ is convex and $f$ is $(-4)$-convex. Consequently, the energy $\E$ is $(-4)$-convex with respect to $L^2(\Omega)$ since
$u \mapsto \E(u) + 4 \int_\Omega u^2 \dx$
is convex.

 \begin{corollary}\label{th:prelim:Existence2}
	Let the assumptions hold from \cref{th:prelim:Existence}.
	Additionally, let $\E$ be $\lambda$-convex for some $\lambda \in \R$ with respect to some Hilbert space $Y \supset X$ and let $u_0 \in X$ such that $\E(\Pi_k u_0) \leq \E(u_0)$. Then it holds that the variational solution $u \in H^{\alpha}(0,T;H)$ is unique, depends continuously on the data, and the energy bound $\E(u(t)) \leq \E(u_0)$ holds for a.e. $t \in (0,T)$.  
\end{corollary}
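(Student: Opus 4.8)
The plan is to run the whole argument at the Galerkin level of the proof of \cref{th:prelim:Existence}, where the approximants $u_k\in AC([0,T];X_k)$ may be tested against $\pta u_k\in X_k$, and then pass to the limit as before. Inserting $v_k=\pta u_k$ into \cref{Eq:Faedo} gives the pointwise identity $\|\pta u_k\|_H^2+\delta\E(u_k,\pta u_k)=0$, i.e.\ $\langle\gradH\E(u_k),\pta u_k\rangle_{X'\times X}=-\|\pta u_k\|_H^2\le 0$. Since $X_k$ is finite-dimensional, $u_k\in W^{\alpha,p'}(0,T;X_k)\cap L^p(0,T;X_k)$ and $\gradH\E(u_k)\in L^{p'}(0,T;X')$, so the convolved fractional chain inequality \cref{Eq:ChainIneq} applies to $\E$, with the Hilbert space $H$ there replaced by $Y$ (obtained by applying \cite[Proposition 1]{fritz2020time} to the convex functional $u\mapsto\E(u)-\tfrac{\lambda}{2}\|u\|_Y^2$). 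Combining it with the identity above and rewriting the $\lambda$-terms by means of \cref{Eq:ChainSquared} in $Y$, one arrives for a.e.\ $t$ at
\begin{equation*}
\E(u_k(t))+\big(g_\alpha*\|\pta u_k\|_H^2\big)(t)\;\le\;\E(\Pi_k u_0)+R_k(t)\;\le\;\E(u_0)+R_k(t),
\end{equation*}
where $R_k\equiv 0$ in the convex case $\lambda\ge 0$, and $R_k(t)=|\lambda|\,g_\alpha*\big(\langle\pta u_k,u_k\rangle_Y-\tfrac12\pta\|u_k\|_Y^2\big)(t)\ge 0$ in the genuinely semiconvex case $\lambda<0$; the last inequality is the standing hypothesis on the projection. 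As $\E$ is bounded below (e.g.\ $\E\ge 0$ for the functionals of \cref{Ex:1}), evaluating at $t=T$ and using $(T-s)^{\alpha-1}\ge T^{\alpha-1}$ then also produces a uniform bound of $\pta u_k$ in $L^2(0,T;H)$, once $R_k$ is controlled.

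Passing to the limit $k\to\infty$ along the subsequence of \cref{th:prelim:Existence}: the $L^2(0,T;H)$-bound on $\pta u_k$ gives $\pta u_k\weak\pta u$ in $L^2(0,T;H)$, and comparison with the weak limit already identified in $L^{p'}(0,T;X')$ yields $\pta u\in L^2(0,T;H)$, i.e.\ $u\in H^\alpha(0,T;H)$. For the energy bound one tests against a nonnegative $\varphi\in C_c(0,T)$ and uses that $v\mapsto\int_0^T\big[\E(v)-\tfrac{\lambda}{2}\|v\|_Y^2\big]\varphi\dt$ is convex, hence weakly sequentially lower semicontinuous on $L^p(0,T;X)$, while $\int_0^T\|u_k\|_Y^2\varphi\dt\to\int_0^T\|u\|_Y^2\varphi\dt$ because $u_k\to u$ strongly in $L^2(0,T;Y)$; this last point uses an Aubin--Lions--Simon lemma with $Y$ as intermediate space (valid since $X\com Y\con X'$, from the bounds on $u_k$ in $L^p(0,T;X)$ and on $\pta u_k$ in $L^{p'}(0,T;X')$). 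Letting $\varphi$ concentrate at a point gives $\E(u(t))\le\E(u_0)$ for a.e.\ $t$.

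For uniqueness and continuous dependence, let $u_1,u_2$ be variational solutions with data $u_{0,1},u_{0,2}$, and set $w=u_1-u_2$, $w_0=u_{0,1}-u_{0,2}$; by the regularity just obtained, $w\in H^\alpha(0,T;H)$. Subtracting the two variational forms and testing with $w\in X$ gives $(\pta w,w)_H=-\big(\delta\E(u_1,w)-\delta\E(u_2,w)\big)$, while $\lambda$-convexity of $\E$ with respect to $Y$ supplies the monotonicity bound $\delta\E(u_1,w)-\delta\E(u_2,w)\ge\lambda\|w\|_Y^2$, whence $(\pta w,w)_H\le-\lambda\|w\|_Y^2$. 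Applying the convolved chain inequality \cref{Eq:ChainIneq} to the quadratic functional $\tfrac12\|\cdot\|_H^2$ (admissible because $w\in H^\alpha(0,T;H)\cap L^2(0,T;H)$) gives $\tfrac12\|w(t)\|_H^2-\tfrac12\|w_0\|_H^2\le\big(g_\alpha*(\pta w,w)_H\big)(t)\le-\lambda\big(g_\alpha*\|w\|_Y^2\big)(t)$. If $\lambda\ge 0$ the right-hand side is nonpositive and the fractional Gr\"onwall inequality \cref{Lem:FractionalGronwall1} immediately yields $\|w(t)\|_H^2\le\|w_0\|_H^2$; the same holds when $H\con Y$ (then $\|w\|_Y^2\le C\|w\|_H^2$), covering the heat, Allen--Cahn and $\dot H^1$ cases. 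When $\lambda<0$ and $Y$ is strictly weaker than $H$ (the Cahn--Hilliard case $H=\dot H^{-1}\supset Y=L^2$) one has to interpolate $\|w\|_Y$ between $\|w\|_X$ and $\|w\|_H$ (Ehrling, using $X\com Y$) and absorb the resulting $\|w\|_X$-term into the coercive part of $\delta\E(u_1,\cdot)-\delta\E(u_2,\cdot)$, which for the energies of \cref{Ex:1} is furnished by their quadratic principal part; \cref{Lem:FractionalGronwall1} then again gives $\|w(t)\|_H^2\le C(\alpha,\lambda,T)\|w_0\|_H^2$ for a.e.\ $t$, and $w_0=0$ forces uniqueness.

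I expect the main obstacle to be exactly the genuinely semiconvex regime $\lambda<0$ with $Y$ weaker than $H$, which already occurs for the Cahn--Hilliard/double-well example singled out before the statement: there the remainder $R_k$ in the energy estimate and the term $-\lambda\,g_\alpha*\|w\|_Y^2$ in the uniqueness estimate cannot be closed from $\lambda$-convexity alone, and one must invoke the full strength of the coercivity \cref{eq:coercivity} together with an interpolation inequality between $X$, $Y$ and $H$. The remaining bookkeeping — running the fractional chain inequality at the Galerkin level with Hilbert space $Y\neq H$ and only $W^{\alpha,p'}$-regularity, and upgrading the weak lower semicontinuity of the non-convex $\E$ through the Aubin--Lions--Simon step — is routine but technical.
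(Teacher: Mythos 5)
Your overall strategy is the paper's: test the Galerkin system with $\pta u_k$, apply the convolved fractional chain inequality \cref{Eq:ChainIneq} for the $\lambda$-convex energy with respect to $Y$, pass to the limit by lower semicontinuity, and prove uniqueness and continuous dependence by subtracting the two variational forms, testing with the difference, and combining the monotonicity estimate from $\lambda$-convexity with \cref{Lem:FractionalGronwall1}. The two points you single out as obstacles are, however, exactly where the paper proceeds differently. For the remainder $R_k$, the paper makes no attempt to show it vanishes or to absorb it by interpolation: it estimates $-\lambda\big(g_\alpha*\langle\pta u_k,u_k\rangle_{X'\times X}\big)(t)$ by H\"older, using the uniform bounds on $\pta u_k$ in $L^{p'}(0,T;X')$ and on $u_k$ in $L^p(0,T;X)$ already provided by \cref{Eq:EnergyFinal2}, so that the right-hand side of \cref{Eq:Higher} is a finite constant and the $H^\alpha(0,T;H)$ and $L^\infty(0,T;Y)$ bounds follow at once --- no Ehrling inequality and no second appeal to the coercivity \cref{eq:coercivity}. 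Be aware, though, that this route only yields $\E(u_k(t))\le\E(u_0)+C$ in the genuinely semiconvex case, while the paper nonetheless asserts the clean bound $\E(u_k(t))\le\E(u_0)$; your suspicion that $\lambda$-convexity alone does not close this case is therefore a fair criticism of the published argument as well, not merely of your own sketch. Your more elaborate lower-semicontinuity passage (test functions plus Aubin--Lions in $Y$) replaces a one-line invocation of weak lower semicontinuity of $\E$ in the paper.

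For uniqueness, the discrepancy is a sign. You write the monotonicity inequality as $\delta\E(u_1,w)-\delta\E(u_2,w)\ge\lambda\|w\|_Y^2$, which is what actually follows from the stated definition of $\lambda$-convexity, and you correctly observe that for $\lambda<0$ with $Y$ weaker than $H$ the resulting term $-\lambda\big(g_\alpha*\|w\|_Y^2\big)$ cannot be fed into \cref{Lem:FractionalGronwall1} without an interpolation or absorption step. The paper instead states the inequality as $\langle\gradH\E(u_1)-\gradH\E(u_2),u\rangle_{X'\times X}\ge-\lambda\|u\|_Y^2$, so that after the normalization $\lambda<0$ the $Y$-term sits with a nonnegative sign on the left of $\tfrac12\pta\|u\|_H^2-\lambda\|u\|_Y^2\le0$ and is simply discarded before applying \cref{Lem:FractionalGronwall1} with $v=b=0$. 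No Ehrling argument and no use of the coercive principal part of the energy appears anywhere in the paper's proof. In short: your proposal reproduces the paper's proof in the convex case and whenever $H$ embeds into $Y$; in the genuinely semiconvex regime you have identified real difficulties, and the extra machinery you propose is your own addition rather than a reconstruction of what the paper does.
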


\begin{proof}
By testing with $\pta u_k$ in the Faedo--Galerkin formulation in \cref{Eq:Faedo}, we have 
\begin{equation*} 
\|\pta u_k\|_H^2 + \delta \E(u_k,\pta u_k) = 0,
\end{equation*}
which yields after the application of the Riemann--Liouville integral operator $\I_{\alpha}$, see \cref{Def:RLintegral},
\begin{equation*} 
	\big(g_\alpha * \|\pta u_k\|_H^2\big)(t) + \big( g_\alpha * \langle \gradH\E(u_k),\pta u_k \rangle_{X' \times X} \big)(t) = 0.
\end{equation*}
Since $\E$ is $\lambda$-convex with respect to $Y$, we can apply the fractional chain inequality, see \cref{Eq:ChainIneq}. Note that any $\lambda$-convex function is $\mu$-convex for all $\mu < \lambda$, and therefore, we assume in this proof w.l.o.g. $\lambda<0$. Applying the fractional chain inequality yields
\begin{equation*} 
	\big(g_{\alpha}*\|\pta u_k\|_H^2\big)(t) + \E(u_k(t)) - \frac{\lambda}{2} \|u_k(t)\|_{Y}^2 \leq \E(\Pi_k u_0) - \frac{\lambda}{2} \|\Pi_k u_0\|_{Y}^2    - \lambda \big( g_{\alpha} *  \langle \pta u_k,u_k\rangle_{X' \times X}\big)(t).
\end{equation*}
The right hand side can be further estimated by
\begin{equation} \label{Eq:Higher}
	\big(g_{\alpha}*\|\pta u_k\|_H^2\big)(t) + \E(u_k(t)) - \frac{\lambda}{2} \|u_k(t)\|_{Y}^2 \leq \E(u_0) - \frac{C\lambda}{2} \|u_0\|_{X}^2    - \lambda \|g_{\alpha}\|_{L^1(0,T)}  \|\pta u_k\|_{L^{p'}(0,T;X')} \|u_k\|_{L^p(0,T;X)},
\end{equation}
which is uniformly bounded due to the auxiliary inequality \cref{Ineq:Aux} and the energy estimate  \cref{Eq:EnergyFinal2} of \cref{th:prelim:Existence}.
 From here, we get a bound of $\E(u_k)$ in $L^\infty(0,T)$. Indeed, it yields $\E(u_k(t))\leq  \E(u_0)$ and consequently, the weakly lower semicontinuity of $\E$ implies $\E(u(t)) \leq \E(u_0)$ for a.e. $t \in (0,T)$. Moreover, by \cref{Eq:Higher} we achieve the uniform bound of $u_k$ in $H^\alpha(0,T;H)$ and $L^\infty(0,T;Y)$, which transfers to the higher regularity of its limit $u$.     \medskip

\noindent \textit{Continuous dependence.} We consider two variational solution pairs $u_1$ and $u_2$ with data $u_{1,0}$ and $u_{2,0}$. We denote their differences by $u=u_1-u_2$ and $u_0=u_{1,0}-u_{2,0}$. Taking their difference yields
\begin{equation*} 
\begin{aligned}
\p_t^\alpha( u,v )_H +  \langle  \gradH\E(u_1)-\gradH\E(u_2),v \rangle_{X' \times X} &= 0,
\end{aligned}
\end{equation*}
for all $v \in V$. Since $\E$ is $\lambda$-convex with respect to $Y$, we have due to the mean value theorem %
\begin{equation*}
\langle \gradH\E(u_1)-\gradH\E(u_2),u \rangle_{X' \times X}  \geq - \lambda \|u\|_Y^2,
\end{equation*}
and therefore, testing with $v=u$  and applying the fractional chain inequality $\frac12\pta  \|u\|_H^2 \leq (\pta u,u)_H$ gives
\begin{equation*} 
\begin{aligned}
\frac12 \pta \|u\|_H^2 - \lambda \|u\|_Y^2   &\leq 0.
\end{aligned}
\end{equation*}
Convolving with $g_\alpha$ and applying the fractional Gr\"onwall--Bellman lemma (setting $w=\|u\|_H^2$, $v=b=0$, $a=\|u_0\|_H^2$ in \cref{Lem:FractionalGronwall1}) finally gives \begin{equation} \label{Eq:ContDep} \|u\|^2_H\leq C(T) \|u_0\|_H^2.\end{equation}

\noindent \textit{Uniqueness.} The proof follows analogously to the procedure of continuous dependency but with the same initial conditions $u_{1,0}$ and $u_{2,0}$. Hence, from \cref{Eq:ContDep} it holds $\|u\|_H^2=0$ and hence $u_1=u_2$ in $H$ for a.e. $t \in (0,T)$.
\end{proof}

\begin{remark}
	 We note that we were only able to prove the  $\E(u(t)) \leq \E(u(0))$, $t \geq 0$, which does not imply that the energy is monotonically decaying over time. In the integer-order $\alpha=1$ setting we immediately achieve $\E(u(t))\leq \E(u(s))$, $t \leq s$, by the same lines. The effect at $t=0$ is of most importance in the study of time-fractional differential equations. This can be also seen from the inequality \cite[Lemma 3.1]{mustapha2014well}
	 $$\int_0^t (\I_{\alpha} v,v)_H \ds \geq \cos\left(\frac{\alpha \pi}{2}\right) \|\I_{\alpha/2} v\|_{L^2(0,t;H)}^2 \qquad \forall v \in C([0,T];H),$$
	 where the integral starts from $t=0$. Therefore, testing with
$v=\pt u$ in the variational form \cref{Eq:GradientVariational} yields by the classical chain rule and integration over the interval $(s,t)$
	\begin{equation*} \label{Eq:EstimateInitial}
		 \E(u(t)) = \E(u(s)) - \int_s^t (\pta  u ,\pt u)_H \, \dd \tau.
	\end{equation*}
	 For $s=0$ we can apply the inequality from above which yields again a bound by the initial state:  
	\begin{equation*} \label{Eq:EnergyCos}
		 \E(u(t)) \leq \E(u(0))- \cos\left(\frac{(1-\alpha) \pi}{2}\right) \|\pt^{(1+\alpha)/2}  u\|_{L^2(0,t;H)}^2 \leq \E(u(0)).
	\end{equation*}
\end{remark}
\section{Augmented Gradient Flow and Energy Dissipation}
\label{Sec:Main}

In this section, we give one of the main results of this article. We introduce a new energy functional and prove the equivalence of the fractional gradient flow to an integer-order gradient flow corresponding to the new energy functional.

\subsection{Motivation: Extension of the dimension}
Let $H$ be a Hilbert space over the bounded domain $\Omega \subset \R^d$, $d \geq 1$, and let $u:(0,T) \to H$ be a Bochner measurable function with $T>0$ being a finite time horizon. 
We introduce a function $\tu$ which acts on an extended domain, and we define  \begin{equation*} \label{Eq:DefTU}
	 \tu : (0,T) \times (0,1) \times \Omega \to \R.
\end{equation*}
As above, we want to interpret it as a function mapping to a Hilbert space and therefore, we define
\begin{equation*}
	\tu : (0,T) \to L^2(0,1;H), 
\end{equation*}
such that  $\tu(t)(\theta,x) = \tu(t,\theta,x)$ for almost every $t \in (0,T)$, $\theta \in (0,1)$, and $x \in \Omega$. We refer to \cref{Fig:Sketch} for a sketch of the domains of the respective functions $u$ and $\tu$.

Further, we assume that the energy $\E:X \to \R$ is Fr\'echet differentiable and satisfies the assumptions of \cref{th:prelim:Existence}.
We consider the following two gradient flow problems: the original time-fractional gradient flow in the Hilbert space~$H$, see \cref{Eq:GradientVariational},
\begin{equation}\label{eq:frational_GF2}
	\pta u = - \gradH \E(u), \qquad u(0)=u_0.
\end{equation}
and a higher-dimensional integer-order gradient flow in the augmented Hilbert space~$\tH$,
\begin{equation}\label{eq:integer_GF}
\pt \tu = - \gradHt\tE(\tu), \qquad \tu(0)=0.
\end{equation}
Under suitable assumptions of $\tE$, we have that $\tu \mapsto \tE(\tu)$ is non-increasing.
We call \cref{eq:integer_GF} the augmented gradient flow in the sense that we want to find an appropriate form of the associated energy functional~$\tE$ and the Hilbert space~$\tH$, depending on~$\alpha$, which provide an equivalence of the variational solutions $u$ and $\tu$.

\begin{figure}[!htb]
	\centering
	\includegraphics[width=.7\textwidth]{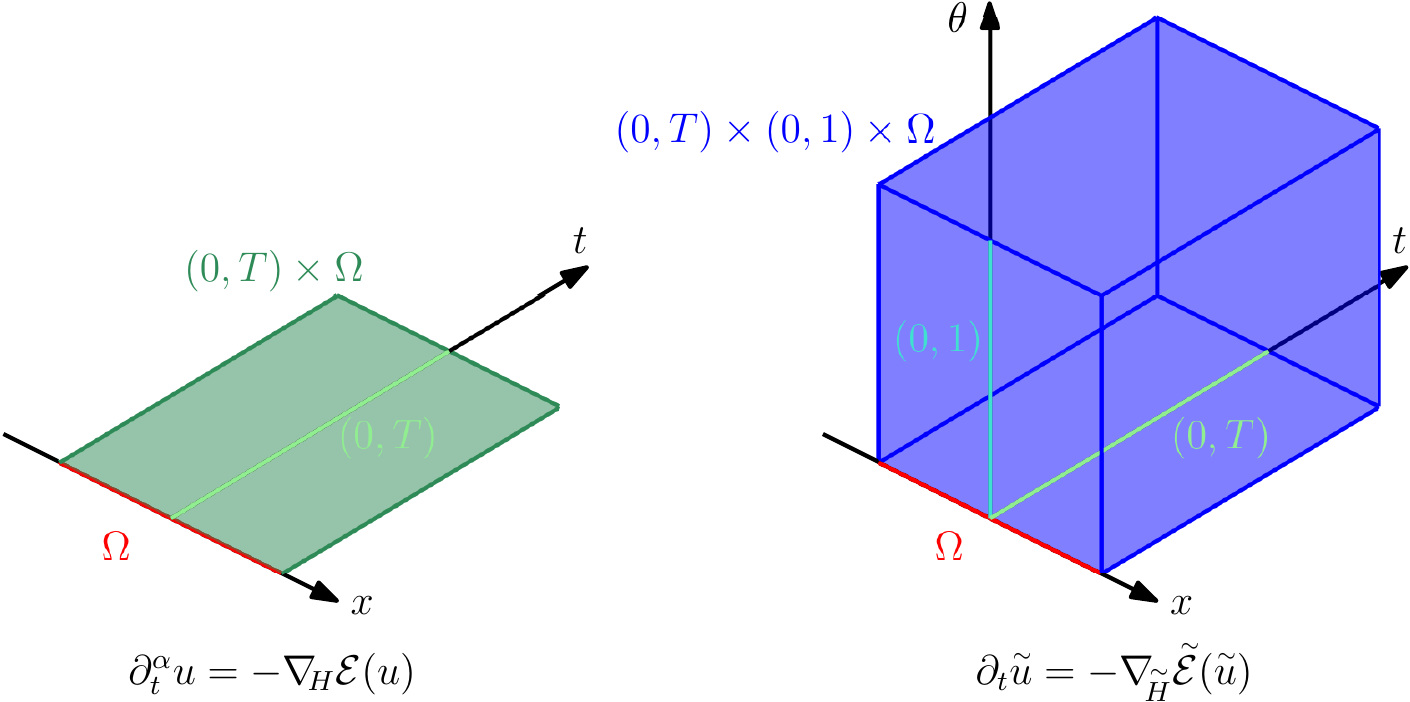}
	\caption{Left: Time-space domain $(0,T) \times \Omega$ of the function $u$. Right: Extended domain of $\widetilde u$. \label{Fig:Sketch}}
\end{figure}

\begin{remark} The idea of the dimension extension is reminiscent of the Caffarelli--Silvestre method applied to the fractional Laplacian, see \cite{caffarelli2007extension}. Indeed, let $(-\Delta)^\alpha$, $\alpha>0$, be the nonlocal fractional Laplacian acting on $\R^d$. Define a function $\tu:\R^d \times (0,\infty) \to \R$ with $\tu(x,0)=u(x)$ in $\R^d$ such that it is the solution to the local degenerate differential equation 
\begin{equation*}
		\nabla_{\!(x,\lambda)} \cdot (\lam^{1-2\alpha}\nabla_{\!(x,\lambda)} \tu(x,\lam))=0,
\end{equation*}
	in $\R^d \times (0,\infty)$.
	Then, one has the relation
\begin{equation*}
		(-\Delta)^\alpha u(x)=-C\lim_{\lam \to 0^+} (\lam^{1-2\alpha} \p_\lam \tu(x,\lam)).
\end{equation*}
	Thus, one recovers a local PDE from a nonlocal one. We refer to \cite{bonito2018numerical,nochetto2015pde,banjai2019tensor} for numerical methods which exploit this equivalence.
\end{remark}

\subsection{Equivalency between fractional and integer-order gradient flow}

Let us introduce the following functions of $\theta\in(0,1)$:
\begin{align}\label{eq:def_kernels}
\begin{aligned}
w_{\alpha}(\theta) &:= g_{1-\alpha}(\theta)\,g_{\alpha}(1-\theta),\\
w_{\alpha,0}(\theta) &:= w_{\alpha}(\theta)\cdot(1-\theta),\\
w_{\alpha,1}(\theta) &:= w_{\alpha}(\theta)\cdot\theta,
\end{aligned}
\end{align}
which are plotted in~\cref{fig:kernels} for different values of~$\alpha$. Note that the functions in~\eqref{eq:def_kernels} are integrable for all $\alpha\in (0,1)$.
Indeed, we have
\begin{equation} \begin{alignedat}{4}
	\int_{0}^{1}w_{\alpha}(\theta)\dth &= \;\;\frac{B(1-\alpha,\alpha)}{\Gamma(1-\alpha)\Gamma(\alpha)} &&=1,
	\\
	\int_{0}^{1}w_{\alpha,0}(\theta)\dth &= \frac{B(1-\alpha,1+\alpha)}{\Gamma(1-\alpha)\Gamma(\alpha)}
	&&= \alpha,
\\
	\int_{0}^{1}w_{\alpha,1}(\theta)\dth &= \;\;\frac{B(2-\alpha,\alpha)}{\Gamma(1-\alpha)\Gamma(\alpha)}
	&&= 1-\alpha,
\end{alignedat}	 \label{eq:int_w}  \end{equation}
where $(x,y) \mapsto B(x,y)$ denotes the Euler beta function \cite{abramowitz1988handbook}.
Therefore, we consider the following weighted Lebesgue spaces:
\begin{equation*}
	\begin{aligned}
		L^2_{\alpha}(0,1) 	&:=L^2(0,1; w_{\alpha}), 	\\
		L^2_{\alpha,0}(0,1) &:=L^2(0,1; w_{\alpha,0}),	\\
		L^2_{\alpha,1}(0,1) &:=L^2(0,1; w_{\alpha,1}).
	\end{aligned}
\end{equation*} 
In particular, given a Hilbert space~$H$ and a weight~$w$, the norm in the $w$-weighted Bochner space $L^2(0,1;H;w)$ is induced by the inner product
\begin{equation*}\label{key}
	(\tu,\tv)_{L^2(0,1;H;w)} = \int_{0}^{1}(\tu(\theta),\tv(\theta))_{H}\,w(\theta)\dth,
	\qquad\forall\tu,\tv\in L^2(0,1;H;w).
\end{equation*}

\begin{figure}[!htb]
	\centering
	\includegraphics[width=.9\textwidth]{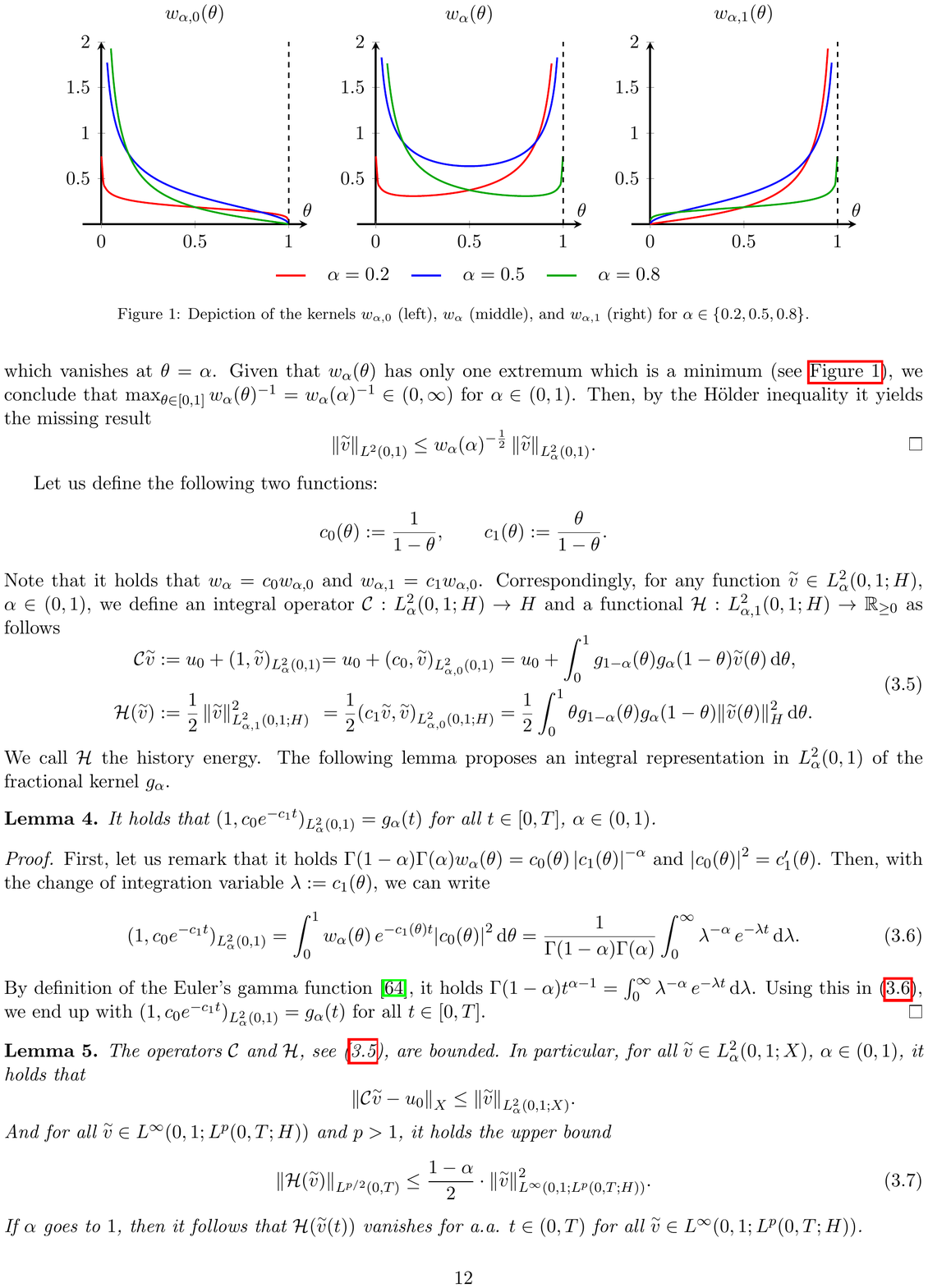}
	\caption{Depiction of the kernels $w_{\alpha,0}$ (left), $w_{\alpha}$ (middle), and $w_{\alpha,1}$ (right) for $\alpha \in \{ 0.2, 0.5, 0.8\}$.} %
\label{fig:kernels}
\end{figure}

	\begin{lemma}
	For $\alpha\in(0,1)$, the following continuous embeddings hold:
	\begin{equation*}\label{key}
	\begin{aligned}
	L^2_{\alpha}(0,1) &\con L^2_{\alpha,i}(0,1),
	\quad i\in\{0,1\},\\
	L^2_{\alpha}(0,1) &\con L^2(0,1).
	\end{aligned}	 
	\end{equation*}
\end{lemma}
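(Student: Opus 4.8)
The plan is to show that each of the three claimed embeddings reduces to a pointwise inequality between the respective weights. Recall that for two weights $w$ and $\hat w$ on $(0,1)$, the continuous embedding $L^2(0,1;\hat w) \con L^2(0,1;w)$ holds as soon as there is a constant $C>0$ with $w(\theta) \leq C\,\hat w(\theta)$ for a.e.\ $\theta \in (0,1)$, since then $\|v\|_{L^2(0,1;w)}^2 = \int_0^1 |v(\theta)|^2 w(\theta)\dth \leq C \int_0^1 |v(\theta)|^2 \hat w(\theta)\dth = C\|v\|_{L^2(0,1;\hat w)}^2$. So it suffices to verify the three pointwise bounds $w_{\alpha,i}(\theta) \leq C\, w_\alpha(\theta)$ for $i \in \{0,1\}$ and $1 \leq C\, w_\alpha(\theta)$ (this last with the roles reversed, for the embedding into unweighted $L^2(0,1)$, i.e.\ $1 \leq C\, w_\alpha(\theta)$).

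For the first two embeddings this is immediate from the definitions in \eqref{eq:def_kernels}: we have $w_{\alpha,0}(\theta) = w_\alpha(\theta)(1-\theta)$ and $w_{\alpha,1}(\theta) = w_\alpha(\theta)\,\theta$, and since $\theta \in (0,1)$ both multipliers $1-\theta$ and $\theta$ lie in $(0,1)$; hence $w_{\alpha,i}(\theta) \leq w_\alpha(\theta)$ pointwise, giving the embeddings $L^2_\alpha(0,1) \con L^2_{\alpha,i}(0,1)$ with embedding constant $1$. For the third embedding $L^2_\alpha(0,1)\con L^2(0,1)$ one needs a lower bound $w_\alpha(\theta) \geq c > 0$ on $(0,1)$. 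Writing out $w_\alpha(\theta) = g_{1-\alpha}(\theta)\,g_\alpha(1-\theta) = \frac{\theta^{-\alpha}}{\Gamma(1-\alpha)}\cdot\frac{(1-\theta)^{\alpha-1}}{\Gamma(\alpha)} = \frac{1}{\Gamma(1-\alpha)\Gamma(\alpha)}\,\theta^{-\alpha}(1-\theta)^{\alpha-1}$, and using that $\theta^{-\alpha} \geq 1$ for $\theta \in (0,1]$ and $\alpha \in (0,1)$, together with $(1-\theta)^{\alpha-1} \geq 1$ for $\theta \in [0,1)$ and $\alpha \in (0,1)$ (since the exponent $\alpha-1$ is negative and $1-\theta \leq 1$), we get $w_\alpha(\theta) \geq \frac{1}{\Gamma(1-\alpha)\Gamma(\alpha)}$ for all $\theta \in (0,1)$. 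Hence $\|v\|_{L^2(0,1)}^2 \leq \Gamma(1-\alpha)\Gamma(\alpha)\,\|v\|_{L^2_\alpha(0,1)}^2$, which is the desired embedding.

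There is essentially no obstacle here: the statement is a matter of comparing explicit weights, and the only mild point to be careful about is that the comparison constants depend on $\alpha$ (which is fine, as $\alpha \in (0,1)$ is fixed), and that one should remark that the relevant weighted $L^2$ spaces are genuine Banach spaces so that "continuous embedding" is meaningful — this follows from the integrability of the weights established in \eqref{eq:int_w}, or more simply from the fact that the weights are locally integrable and a.e.\ positive. One may also note for completeness that the same computation shows $w_\alpha$ is bounded below but not above on $(0,1)$ (it blows up at both endpoints), so the reverse embeddings do not hold; but this is not needed for the statement. I would therefore present the proof as a short three-line argument invoking the pointwise weight inequalities above.
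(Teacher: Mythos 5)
Your proof is correct. For the first two embeddings you and the paper do essentially the same thing: the paper writes $\norm{\tv}_{L^2_{\alpha}(0,1)}^2 = \norm{\tv}_{L^2_{\alpha,0}(0,1)}^2 + \norm{\tv}_{L^2_{\alpha,1}(0,1)}^2$ and drops one term, which is just your pointwise inequality $w_{\alpha,i}\le w_{\alpha}$ in disguise. For the embedding into $L^2(0,1)$ the overall strategy is again the same (a uniform lower bound on $w_{\alpha}$), but you obtain it differently: the paper differentiates $w_{\alpha}(\theta)^{-1}$, locates the unique critical point at $\theta=\alpha$, and uses the \emph{sharp} constant $w_{\alpha}(\alpha)^{-1/2}$ --- at the price of justifying that this critical point is the global maximum of $w_\alpha^{-1}$ by appeal to the plotted shape of the kernel in \cref{fig:kernels}. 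You instead bound each factor separately, $\theta^{-\alpha}\ge 1$ and $(1-\theta)^{\alpha-1}\ge 1$ on $(0,1)$, giving the cruder constant $\Gamma(1-\alpha)\Gamma(\alpha)$ but a fully self-contained two-line argument with no calculus and no reference to a figure. Since only the existence of \emph{some} finite embedding constant is needed, your version is arguably the cleaner proof of the stated lemma; the paper's version has the minor side benefit of identifying $\min_{\theta}w_{\alpha}(\theta)=w_{\alpha}(\alpha)$ exactly. Your closing remarks (the spaces are genuine normed spaces because the weights are integrable and a.e.\ positive, and the reverse embeddings fail because $w_{\alpha}$ blows up at the endpoints) are accurate and harmless additions.
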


\begin{proof}
	The first embedding directly follows from the expression
	\begin{equation*}\label{key}
		\norm{\tv}_{L^2_{\alpha}(0,1)}^2 = \norm{\tv}_{L^2_{\alpha,0}(0,1)}^2 + \norm{\tv}_{L^2_{\alpha,1}(0,1)}^2 \ge \norm{\tv}_{L^2_{\alpha,i}(0,1)}^2,\quad i\in\{0,1\}.
	\end{equation*}
	Next, we investigate the maximum of $w_{\alpha}(\theta)^{-1}$.
	To this end, we compute its derivative:
	\begin{equation*}\label{key}
	\begin{aligned}	
	\frac{\p_\theta \left(w_{\alpha}(\theta)^{-1}\right)}{\Gamma(\alpha)\,\Gamma(1-\alpha)} 
	=
	 \p_\theta \left( \theta^{\alpha}(1-\theta)^{1-\alpha} \right)
	&= \bigl[ \alpha \theta^{\alpha-1}(1-\theta)^{1-\alpha} - (1-\alpha) \theta^{\alpha}(1-\theta)^{-\alpha}\bigr]
	\\
	&= \theta^{\alpha-1}(1-\theta)^{-\alpha} \bigl[ \alpha (1-\theta) - (1-\alpha) \theta\bigr],
	\end{aligned}
	\end{equation*}
	which vanishes at $\theta=\alpha$.
	Given that $w_{\alpha}(\theta)$ has only one extremum which is a minimum (see~\Cref{fig:kernels}), we conclude that $\max_{\theta\in[0,1]}w_{\alpha}(\theta)^{-1} = w_{\alpha}(\alpha)^{-1} \in (0,\infty)$ for $\alpha\in(0,1)$.
	Then, by the H\"older inequality it yields the missing result
	\begin{equation*}\label{key}
	\norm{\tv}_{L^2(0,1)} 
	\le w_{\alpha}(\alpha)^{-\frac{1}{2}}\,\norm{\tv}_{L^2_{\alpha}(0,1)}. \qedhere
	\end{equation*}
\end{proof}

Let us define the following two functions: \begin{equation*}
	\begin{aligned}
		c_0(\theta) :=\frac{1}{1-\theta}, \qquad
		c_1(\theta) :=\frac{\theta}{1-\theta}.
	\end{aligned}
\end{equation*}
Note that it holds that $w_{\alpha}=c_0 w_{\alpha,0}$ and $w_{\alpha,1}=c_1 w_{\alpha,0}$.
Correspondingly, for any function $\tv \in L^2_{\alpha}(0,1;H)$, $\alpha \in (0,1)$, we define an integral operator $\mathcal{C}:L^2_{\alpha}(0,1;H)\to H$ and a functional~$\H:L^2_{\alpha,1}(0,1;H)\to\R_{\ge0}$ as follows
\begin{equation} \begin{alignedat}{4}
\mathcal{C}\tv &:= u_0 + (1,\tv)_{L^2_{\alpha}(0,1)}
&=
u_0 + (c_0,\tv)_{L^2_{\alpha,0}(0,1)}
&=
u_0 + \int_{0}^{1} g_{1-\alpha}(\theta) g_\alpha(1-\theta) \tv(\theta)  \dth,
\\
\H(\tv) 
&:= \frac12\,\norm{\tv}_{L^2_{\alpha,1}(0,1;H)}^2
&= \frac12(c_1\tv,\tv)_{L^2_{\alpha,0}(0,1;H)}
&=
\frac12 \int_{0}^{1} \theta g_{1-\alpha}(\theta) g_\alpha(1-\theta)  \norm{\tv\left(\theta\right)}_H^2 \dth.
\end{alignedat} \label{eq:def_HC} \end{equation}
We call $\H$ the history energy.
The following lemma proposes an integral representation in $L_{\alpha}^2(0,1)$ of the fractional kernel~$g_\alpha$.
\begin{lemma}\label{th:kernel_rep}
	It holds that
	$(1, c_0 e^{-c_1 t})_{L_{\alpha}^2(0,1)} = g_\alpha(t)$ for all $t\in[0,T]$, $\alpha \in (0,1)$.
\end{lemma}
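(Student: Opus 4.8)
The plan is to reduce the claimed identity to the classical Euler integral for the Gamma function by a single change of variables. First I would insert the definitions: since
$$w_{\alpha}(\theta)=g_{1-\alpha}(\theta)\,g_{\alpha}(1-\theta)=\frac{\theta^{-\alpha}(1-\theta)^{\alpha-1}}{\Gamma(\alpha)\Gamma(1-\alpha)},\qquad c_0(\theta)=\frac{1}{1-\theta},\qquad c_1(\theta)=\frac{\theta}{1-\theta},$$
the left-hand side becomes
$$(1,c_0 e^{-c_1 t})_{L_{\alpha}^2(0,1)}=\frac{1}{\Gamma(\alpha)\Gamma(1-\alpha)}\int_{0}^{1}\theta^{-\alpha}(1-\theta)^{\alpha-2}\exp{-\tfrac{\theta}{1-\theta}\,t}\dth.$$
Hence it suffices to prove that this integral equals $\Gamma(1-\alpha)\,t^{\alpha-1}$, which rearranges to exactly $g_\alpha(t)=t^{\alpha-1}/\Gamma(\alpha)$.

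The crucial step is the substitution $r=\theta/(1-\theta)$, i.e.\ $\theta=r/(1+r)$, a smooth strictly increasing bijection of $(0,1)$ onto $(0,\infty)$ with $\dth=(1+r)^{-2}\,\dd r$. Under it one has $\theta^{-\alpha}=r^{-\alpha}(1+r)^{\alpha}$ and $(1-\theta)^{\alpha-2}=(1+r)^{2-\alpha}$, so every power of $(1+r)$ cancels and the integral collapses to $\int_{0}^{\infty}r^{-\alpha}e^{-rt}\,\dd r$. For $t>0$, a further rescaling $s=rt$ gives $\int_{0}^{\infty}r^{-\alpha}e^{-rt}\,\dd r=t^{\alpha-1}\int_{0}^{\infty}s^{-\alpha}e^{-s}\,\dd s=t^{\alpha-1}\Gamma(1-\alpha)$, which is precisely what is required.

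Finally I would record the integrability and the edge case. For $t>0$ and $\alpha\in(0,1)$ the integrand $r^{-\alpha}e^{-rt}$ is integrable near $r=0$ (since $\alpha<1$) and at infinity (exponential decay), so the change of variables and the rescaling are legitimate and all quantities are finite; at $t=0$ both $g_\alpha(0)$ and the integral diverge, so the identity holds in the extended-real sense. I do not expect a genuine obstacle here: the only insight needed is that the weight $w_\alpha$ together with the factors $c_0$ and $c_1$ has been tailored so that the substitution $\theta\mapsto\theta/(1-\theta)$ turns the weighted integral into the Euler integral for $\Gamma(1-\alpha)$; the remainder is bookkeeping with exponents.
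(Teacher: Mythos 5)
Your proposal is correct and follows essentially the same route as the paper: both insert the definitions of $w_\alpha$, $c_0$, $c_1$, perform the substitution $\lambda=\theta/(1-\theta)$ (under which all powers of $1+\lambda$ cancel), and reduce the weighted integral to the Euler integral $\int_0^\infty \lambda^{-\alpha}e^{-\lambda t}\,\dd\lambda=\Gamma(1-\alpha)t^{\alpha-1}$. The only difference is presentational: the paper packages the cancellation into the identities $\Gamma(1-\alpha)\Gamma(\alpha)w_\alpha=c_0\,|c_1|^{-\alpha}$ and $|c_0|^2=c_1'$, whereas you track the exponents explicitly and additionally record integrability and the $t=0$ edge case.
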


\begin{proof}
	First, let us remark that it holds $\Gamma(1-\alpha)\Gamma(\alpha)w_{\alpha}(\theta)=c_0(\theta)\,\abs{c_1(\theta)}^{-\alpha}$ and $\abs{c_0(\theta)}^2 = c_1^\prime(\theta)$.
	Then, with the change of integration variable $\lambda:= c_1(\theta)$, we can write
	\begin{equation}\label{eq:kernel_rep:proof}
		(1, c_0 e^{-c_1 t})_{L_{\alpha}^2(0,1)}
		=
		\int_{0}^{1} w_{\alpha}(\theta)\, e^{-c_1(\theta)t} \abs{c_0(\theta)}^2\dth
		=
		\frac{1}{\Gamma(1-\alpha)\Gamma(\alpha)}\int_{0}^{\infty} \lambda^{-\alpha}\, e^{-\lambda t} \dl.
	\end{equation}
	By definition of the Euler's gamma function \cite{abramowitz1988handbook}, it holds $\Gamma(1-\alpha)t^{\alpha-1}=\int_{0}^{\infty} \lambda^{-\alpha}\, e^{-\lambda t}\dl$.
	Using this in~\eqref{eq:kernel_rep:proof}, we end up with $(1, c_0 e^{-c_1 t})_{L_{\alpha}^2(0,1)}
	= g_\alpha(t)$ for all $t \in [0,T]$.
\end{proof}

\begin{lemma}\label{th:Cv_estimate} \label{th:Hv_est}
	The operators $\mathcal{C}$ and $\H$, see \cref{eq:def_HC}, are bounded.
	In particular, for all $\tv \in L^2_{\alpha}(0,1;X)$, $\alpha \in (0,1)$, it holds that
\begin{equation*} 
	\norm{\mathcal{C}\tv-u_0}_{X}
	\le
	\norm{\tv}_{L^2_{\alpha}(0,1;X)}.
\end{equation*}
And for all $\tv \in L^\infty(0,1;L^{p}(0,T;H))$ and $p>1$, it holds the upper bound
\begin{equation}\label{Eq:BoundH}
\norm{\H(\tv)}_{L^{p/2}(0,T)} \le \frac{1-\alpha}{2}\cdot\norm{\tv}_{L^\infty(0,1;L^{p}(0,T;H))}^2.
\end{equation}
If $\alpha$ goes to $1$, then it follows that $\H(\tv(t))$ vanishes for a.a. $t \in (0,T)$ for all $\tv\in L^\infty(0,1;L^{p}(0,T;H))$.
\end{lemma}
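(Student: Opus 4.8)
The statement of \Cref{th:Cv_estimate} bundles three elementary boundedness facts, and I would prove each of them by reducing it --- via a Cauchy--Schwarz or Jensen inequality --- to one of the closed-form integrals recorded in \eqref{eq:int_w}. I treat the three claims in the order they are stated. For the bound on $\mathcal{C}$, I would use the representation $\mathcal{C}\tv-u_0=\int_0^1 w_\alpha(\theta)\tv(\theta)\dth$ from \eqref{eq:def_HC}: the triangle inequality for the $X$-valued Bochner integral gives $\norm{\mathcal{C}\tv-u_0}_X\le\int_0^1 w_\alpha(\theta)\norm{\tv(\theta)}_X\dth=(1,\norm{\tv(\cdot)}_X)_{L^2_\alpha(0,1)}$, and Cauchy--Schwarz in $L^2_\alpha(0,1)$ bounds the right-hand side by $\norm{1}_{L^2_\alpha(0,1)}\,\norm{\tv}_{L^2_\alpha(0,1;X)}$; since $\norm{1}_{L^2_\alpha(0,1)}^2=\int_0^1 w_\alpha\dth=1$ by the first line of \eqref{eq:int_w}, the estimate follows with constant one.

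For the bound on $\H$, I start from $\H(\tv(t))=\tfrac12\int_0^1 w_{\alpha,1}(\theta)\norm{\tv(t)(\theta)}_H^2\dth$, i.e. $\tfrac{1-\alpha}{2}$ times the mean of $\theta\mapsto\norm{\tv(t)(\theta)}_H^2$ against the probability measure $\d\mu(\theta):=w_{\alpha,1}(\theta)\dth/(1-\alpha)$. For $p\ge 2$ I would take the $L^{p/2}(0,T)$-norm in $t$, apply Minkowski's integral inequality (equivalently, Jensen for the convex map $x\mapsto x^{p/2}$ and $\mu$), interchange the $t$- and $\theta$-integrations by Tonelli, bound the inner integral $\int_0^T\norm{\tv(t)(\theta)}_H^p\dt$ by $\norm{\tv}_{L^\infty(0,1;L^p(0,T;H))}^p$, and reassemble using $\int_0^1 w_{\alpha,1}\dth=1-\alpha$ from \eqref{eq:int_w}; this yields \eqref{Eq:BoundH}. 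For $1<p<2$, where the triangle inequality in $L^{p/2}$ is unavailable, I would instead use the subadditivity of $x\mapsto x^{p/2}$ under integration, then Tonelli, and finally Jensen for the concave map $x\mapsto x^{p/2}$ against Lebesgue measure on $(0,1)$ to pass from $\int_0^1 w_{\alpha,1}^{p/2}\dth$ to $(1-\alpha)^{p/2}$; this again gives \eqref{Eq:BoundH}.

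For the limit $\alpha\to 1$, I observe that the space $L^\infty(0,1;L^p(0,T;H))$ does not depend on $\alpha$, so for a fixed $\tv$ the right-hand side of \eqref{Eq:BoundH} tends to $0$ as $\alpha\to1$; hence $\H(\tv)\to 0$ in $L^{p/2}(0,T)$ and, in particular, $\H(\tv(t))\to 0$ for a.e. $t\in(0,T)$ along a subsequence. Alternatively, one sees directly that $w_{\alpha,1}(\theta)=\theta^{1-\alpha}(1-\theta)^{\alpha-1}/(\Gamma(\alpha)\Gamma(1-\alpha))\to 0$ pointwise on $(0,1)$ as $\alpha\to 1$, with an $\alpha$-uniform integrable majorant, so dominated convergence applies.

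I do not expect a genuine obstacle: everything is routine manipulation of weighted Bochner norms together with the explicit values in \eqref{eq:int_w}. The only point that needs attention is the exponent $p/2$ in the bound on $\H$, since for $1<p<2$ one cannot invoke Minkowski's inequality directly and must argue through concavity as above, whereas for $p\ge 2$ the concavity argument is unavailable because $\int_0^1 w_{\alpha,1}^{p/2}\dth$ may diverge; the two regimes therefore have to be handled separately.
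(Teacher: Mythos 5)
Your treatment of $\mathcal{C}$ (Bochner triangle inequality, then Cauchy--Schwarz against $\norm{1}_{L^2_\alpha(0,1)}^2=\int_0^1 w_\alpha\dth=1$) and your treatment of $\H$ for $p\ge 2$ (Minkowski's integral inequality in $L^{p/2}(0,T)$, Tonelli, and $\int_0^1 w_{\alpha,1}\dth=1-\alpha$) are exactly the paper's argument, and your handling of the limit $\alpha\to1$ is adequate. The problem is the branch $1<p<2$, which you are right to flag but wrong in how you resolve. The step you call ``subadditivity of $x\mapsto x^{p/2}$ under integration,'' i.e. $\bigl(\int_0^1 F(\theta)\dth\bigr)^{p/2}\le\int_0^1 F(\theta)^{p/2}\dth$, is false for non-atomic measures: subadditivity $(a+b)^{r}\le a^{r}+b^{r}$, $0<r<1$, passes to sums but not to integrals, and for a probability measure Jensen applied to the concave map $x\mapsto x^{r}$ gives precisely the \emph{reverse} inequality. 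Concretely, $F=n\,\mathbf{1}_{[0,1/n]}$ has $\bigl(\int_0^1F\dth\bigr)^{r}=1$ while $\int_0^1F^{r}\dth=n^{r-1}\to0$. So the first link of your chain for $1<p<2$ breaks.

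Moreover, no repair is possible, because \cref{Eq:BoundH} with the constant $\tfrac{1-\alpha}{2}$ is genuinely false for $1<p<2$. Take $H=\R$, split $(0,1)$ into $A_1,A_2$ of equal mass $1/2$ for the probability measure $w_{\alpha,1}(\theta)\dth/(1-\alpha)$, pick $f_1,f_2\ge0$ with disjoint supports in $(0,T)$ and $\int_0^T f_i^{p/2}\dt=1$, and set $\tv(t,\theta)=f_i(t)^{1/2}$ for $\theta\in A_i$. Then $\norm{\tv}_{L^\infty(0,1;L^p(0,T;H))}=1$, while $\H(\tv(t))=\tfrac{1-\alpha}{4}(f_1(t)+f_2(t))$ and, by disjointness of supports, $\norm{\H(\tv)}_{L^{p/2}(0,T)}=\tfrac{1-\alpha}{2}\,2^{2/p-1}>\tfrac{1-\alpha}{2}$ whenever $p<2$. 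The honest conclusion is that the estimate, and the paper's own proof --- whose first line is the continuous triangle inequality in $L^{p/2}(0,T)$ and hence already presupposes $p/2\ge1$ --- require $p\ge2$; for $1<p<2$ one only gets the weaker bound with an extra factor depending on $p$ (or the pointwise-in-$t$ bound $\H(\tv(t))\le\tfrac{1-\alpha}{2}\,\esssup_\theta\norm{\tv(t,\theta)}_H^2$), which still suffices for the boundedness and the vanishing as $\alpha\to1$ that the lemma is used for later. You should state the two regimes honestly rather than claim a proof of \cref{Eq:BoundH} for all $p>1$.
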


\begin{proof}
	Note that by~\eqref{eq:int_w}, we have $\norm{w_{\alpha}}_{L^1(0,1)} = (1,w_{\alpha})_{L^2(0,1)} = 1$.
	Then, the following estimate holds
	\begin{equation*}\label{key}
	\norm{\mathcal{C}\tv-u_0}_{X}
	\le
	\int_{0}^{1}w_{\alpha}(\theta) \,\norm{\tv(\theta)}_{X}\dth
	\le
	\norm{w_{\alpha}}_{L^1(0,1)}^{\frac{1}{2}} \cdot \norm{\tv}_{L^2_{\alpha}(0,1;X)}.
	\end{equation*}
	Besides, by~\eqref{eq:int_w} we have $\norm{w_{\alpha,1}}_{L^1(0,1)} = (1,w_{\alpha,1})_{L^2(0,1)} = 1-\alpha$. 
	Then, using this and the triangle inequality, we can write
\begin{align*}
	\norm{\H(\tv)}_{L^{p/2}(0,T)} &\leq \int_0^1 \Big\| \frac12 w_{\alpha,1}(\theta) \|\tv(\theta)\|_H^2 \Big\|_{L^{p/2}(0,T)} \dth \\ 
	&= 
	\frac12 \int_{0}^{1}w_{\alpha,1}(\theta) \,\norm{\tv\left(\theta\right)}_{L^{p}(0,T;H)}^2\dth \\
	&\le \frac{1-\alpha}{2}\cdot\norm{\tv}_{L^\infty(0,1;L^{p}(0,T;H))}^2.  \qedhere
	\end{align*}	
\end{proof}

Now we are ready to prove our main result stating the equivalence of the time-fractional gradient flow \eqref{eq:frational_GF2}  and its integer-order counterpart \eqref{eq:integer_GF}.

\begin{theorem}\label{th:energy}
	Let the assumptions from \cref{th:prelim:Existence} hold. Further, let $\tX=L^2_{\alpha}(0,1;X)$ and $\tH=L^2_{\alpha,0}(0,1;H)$ with $\alpha \in (0,1)$.  We assume from now on that the energy functional~$\tE$ is of the form
	\begin{equation}\label{eq:tE(E)}
	\tE
	=
	\E\circ \mathcal{C}
	+ \H,
	\end{equation}
	with $\mathcal{C}$ and $\H$ defined in~\eqref{eq:def_HC}.
	Then, for any solution $u$ to the variational form of~\eqref{eq:frational_GF2} there exists a solution  $\tu$ to the variational form of~\eqref{eq:integer_GF}, and vice-versa, such that the following equivalence of solutions holds: 
	\begin{align}
		u(t) &=\mathcal{C}\tu(t) = u_0 + \int_{0}^{1}g_{1-\alpha}(\theta)\,g_{\alpha}(1-\theta) \tu(t,\theta)\dth,
		\label{eq:u(tu)}
		\\
		\tu(t,\theta) &=\G{u}(t,\theta) :=  -c_0(\theta)\int_{0}^{t}e^{-c_1(\theta) (t-s)}\gradH \E(u(s))\ds.
		\label{eq:tu(u)}
	\end{align}
	Moreover, we have the regularity result
	\begin{equation*}\label{eq:spaces}
	\begin{aligned}
	u &\in L^p(0,T;X) \cap L^\infty(0,T;H) \cap H^\alpha(0,T;H),\\
	\tu &\in L^\infty(0,T;L^2_{\alpha}(0,1;H)) \cap H^1(0,T;L^2_{\alpha,0}(0,1;H))
\cap L^{\infty}(0,1; L^{p'}(0,T;H)).	
	\end{aligned}		
	\end{equation*}
\end{theorem}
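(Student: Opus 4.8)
The plan is to establish the equivalence by constructing each solution explicitly from the other via the maps $\mathcal{C}$ and $\mathcal{G}$ stated in \cref{eq:u(tu)}--\cref{eq:tu(u)}, verifying that these maps are inverse to each other on the relevant solution classes, and checking that the variational forms transform into one another. First I would start from a variational solution $u$ of the time-fractional flow \cref{eq:frational_GF2}, which exists by \cref{th:prelim:Existence}, and \emph{define} $\tu := \mathcal{G}\{u\}$ by the exponential-kernel formula. Observe that $\tu(t,\theta)$ solves, for each fixed $\theta \in (0,1)$, the scalar-in-$\theta$ linear ODE $\pt \tu(t,\theta) = -c_1(\theta)\tu(t,\theta) - c_0(\theta)\gradH\E(u(t))$ with $\tu(0,\theta)=0$; this is exactly the Duhamel representation. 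The key algebraic identity is then \cref{th:kernel_rep}: convolving the ODE against the weight and integrating in $\theta$ against $c_0$ turns the term $\int_0^1 c_0(\theta)e^{-c_1(\theta)(t-s)}w_\alpha(\theta)\dth$ into $g_\alpha(t-s)$, so that $\mathcal{C}\tu(t) - u_0 = -\big(g_\alpha * \gradH\E(u)\big)(t) = -\big(\I_\alpha \gradH\E(u)\big)(t)$. Comparing with \cref{Eq:InverseConvolution} applied to $\pta u = -\gradH\E(u)$, namely $u(t)-u_0 = \I_\alpha\pta u(t) = -\I_\alpha\gradH\E(u)(t)$, yields $\mathcal{C}\tu = u$, which is \cref{eq:u(tu)}.

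Next I would verify that this $\tu$ is indeed a variational solution of the integer-order flow \cref{eq:integer_GF} with energy $\tE = \E\circ\mathcal{C} + \H$. One computes the Gâteaux derivative of $\tE$ at $\tu$ in direction $\tv \in \tX = L^2_\alpha(0,1;X)$: the chain rule gives $\delta(\E\circ\mathcal{C})(\tu,\tv) = \langle \gradH\E(\mathcal{C}\tu), \mathcal{C}\tv - u_0\rangle_{X'\times X} = (1,\tv)_{L^2_\alpha(0,1)}$ paired with $\gradH\E(u)$, i.e. $\int_0^1 w_\alpha(\theta)\langle\gradH\E(u),\tv(\theta)\rangle\dth$; and $\delta\H(\tu,\tv) = (c_1\tu,\tv)_{L^2_{\alpha,0}(0,1;H)} = \int_0^1 \theta\, w_\alpha(\theta)(\tu(\theta),\tv(\theta))_H\dth$. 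Since $\tH = L^2_{\alpha,0}(0,1;H)$ carries weight $w_{\alpha,0} = (1-\theta)w_\alpha$, the identification $\gradHt\tE(\tu)$ requires rewriting both contributions against $w_{\alpha,0}$: using $w_\alpha = c_0 w_{\alpha,0}$ and $w_{\alpha,1} = c_1 w_{\alpha,0}$ one gets that the $\tH$-gradient is represented by $c_0\,\gradH\E(u) + c_1\tu$ weighted by $w_{\alpha,0}$. The defining ODE for $\tu$ then says precisely $\pt\tu = -(c_1\tu + c_0\gradH\E(u)) = -\gradHt\tE(\tu)$ in the appropriate weak sense, which is \cref{eq:integer_GF}. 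For the converse direction I would reverse the argument: given a variational solution $\tu$ of \cref{eq:integer_GF}, the $\theta$-componentwise ODE structure forces the Duhamel formula \cref{eq:tu(u)} with $\gradH\E$ replaced a priori by $-\mathcal{C}^{-1}$-type data, then \cref{th:kernel_rep} again and \cref{Eq:InverseConvolution} read backwards give that $u := \mathcal{C}\tu$ solves the fractional flow; uniqueness from \cref{th:prelim:Existence2} (or a direct argument) closes the loop.

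Finally the regularity statement is obtained by transporting the bounds of \cref{th:prelim:Existence} through the maps. From $u \in L^p(0,T;X)\cap L^\infty(0,T;H)\cap H^\alpha(0,T;H)$ and $\gradH\E(u) \in L^{p'}(0,T;X')$, the explicit formula \cref{eq:tu(u)} together with Young's convolution inequality (the kernel $e^{-c_1(\theta)\cdot}$ is in $L^1(0,T)$ with norm $\le c_1(\theta)^{-1}(1-e^{-c_1(\theta)T})$, uniformly controlled away from $\theta=1$ by the weight) gives $\tu \in L^\infty(0,1;L^{p'}(0,T;H))$; the remaining spaces $L^\infty(0,T;L^2_\alpha(0,1;H))$ and $H^1(0,T;L^2_{\alpha,0}(0,1;H))$ follow by integrating the pointwise-in-$\theta$ estimates against $w_\alpha$ respectively $w_{\alpha,0}$ and invoking \cref{th:Cv_estimate} and \cref{Eq:BoundH}; the $H^1$-in-time bound uses $\pt\tu = -\gradHt\tE(\tu)$ and the boundedness of $\mathcal{C}$ and $\H$. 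The main obstacle I anticipate is the careful bookkeeping near the singular endpoint $\theta \to 1^-$, where $c_0$ and $c_1$ blow up: one must check that all the weighted integrals converge and that the operator $\mathcal{C}$, the functional $\H$, and the exponential kernels interact so that the singularities are exactly absorbed by the weights $w_\alpha$, $w_{\alpha,0}$, $w_{\alpha,1}$ — this is where \cref{th:kernel_rep} and the integrability computations \cref{eq:int_w} do the real work, and where the precise choice $\tH = L^2_{\alpha,0}(0,1;H)$ (rather than $L^2_\alpha$) becomes essential for the $\tH$-gradient to be well-defined.
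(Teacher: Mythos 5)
Your construction of the equivalence itself is essentially the paper's argument: both directions pass through the Duhamel representation of the $\theta$-indexed ODE, the kernel identity of \cref{th:kernel_rep} to collapse the $\theta$-integral of $c_0e^{-c_1(t-s)}$ into $g_\alpha(t-s)$, the identity \cref{Eq:InverseConvolution} to move between the Volterra form $u=u_0-\I_\alpha\gradH\E(u)$ and the Caputo form, and the same computation of the G\^ateaux derivative giving $\gradHt\tE(\tu)=c_1\tu+c_0\gradH\E(\mathcal{C}\tu)$. Your $L^\infty(0,1;L^{p'}(0,T;H))$ bound via Young's convolution inequality is also exactly the paper's.

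The gap is in your route to the remaining regularity $\tu\in L^\infty(0,T;L^2_{\alpha}(0,1;H))\cap H^1(0,T;L^2_{\alpha,0}(0,1;H))$. You propose to ``integrate the pointwise-in-$\theta$ estimates against $w_\alpha$'' and to bound $\pt\tu$ termwise from $\pt\tu=-c_1\tu-c_0\gradH\E(u)$; both steps fail at the endpoint $\theta\to1^-$. A sup-in-time bound from the Duhamel formula costs $\sup_t\|\tu(t,\theta)\|_H\le c_0(\theta)\,\|e^{-c_1(\theta)\cdot}\|_{L^p(0,T)}\,\|\gradH\E(u)\|_{L^{p'}(0,T;H)}\sim(1-\theta)^{-1/p'}$, and $w_\alpha(\theta)(1-\theta)^{-2/p'}\sim(1-\theta)^{\alpha-1-2/p'}$ is integrable only if $\alpha>2/p'$ (never, e.g., for $p=2$). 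Likewise the two terms on the right of the equation do not separately lie in $\tH$: since $w_{\alpha,0}\,c_0^2=w_\alpha c_0\propto\theta^{-\alpha}(1-\theta)^{\alpha-2}$ is not integrable, $c_0\gradH\E(u(t))\notin\tH$ for $\gradH\E(u(t))\neq0$, so the triangle inequality gives no $H^1(0,T;\tH)$ bound --- only the cancellation between $c_1\tu$ and $c_0\gradH\E(u)$ near $\theta=1$ makes $\pt\tu(t,\cdot)$ land in $\tH$. The paper captures this structurally by testing \cref{Eq:Diffeq} with $\tv=\pt\tu+\tu$, which produces the exact derivatives $\frac12\ddt\|\tu\|^2_{L^2_{\alpha}(0,1;H)}$ (the $\tH$-part and the $\H$-part recombine via $\|\cdot\|^2_{L^2_{\alpha,0}}+\|\cdot\|^2_{L^2_{\alpha,1}}=\|\cdot\|^2_{L^2_{\alpha}}$) and $\ddt\E(u)$ together with the nonnegative terms $\|\pt\tu\|^2_{\tH}$ and $2\H(\tu)$; integrating in time and invoking \cref{eq:continuity}--\cref{eq:coercivity} then yields both bounds simultaneously. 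You need this energy-identity argument (or an equivalent one exploiting the same cancellation); the weighted integration of pointwise bounds does not converge.
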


\begin{proof} We separate the proof into three steps. First, we assume a variational solution $\tu$ of \cref{eq:integer_GF} and show that $\mathcal{C} \tu$ is a solution to \cref{eq:frational_GF2}. Second, we assume a solution $u$ of \cref{eq:frational_GF2} and prove thar $\G{u}$ is a solution to \cref{eq:integer_GF}. Lastly, we prove the stated regularity of the solutions.
	\medskip
	
\noindent \textit{Augmented to fractional}.	Let $\tu$ be a solution to~\eqref{eq:integer_GF}.
	The G\^ateaux derivative of the energy functional~$\tE$ of the form~\eqref{eq:tE(E)} reads
	\begin{equation*}
			\delta \tE(\tu,\tv)
			= 
			(c_0,\delta\E(\mathcal{C}\tu,\tv))_{L^2_{\alpha,0}(0,1)} +  ( c_1 \tu,  \tv)_{\tH}
			=
			(c_0 \gradH\E(\mathcal{C}\tu) + c_1 \tu,  \tv)_{\tH}.
	\end{equation*}
	for all $\tv \in \tX$. 
Hence, the $\tH$-gradient of $\tE$ is given by $\gradHt\tE(\tu) = c_1 \tu + c_0 \gradH\E(\mathcal{C}\tu)$, and the variational form of the associated gradient flow in $\tH$ can be thus written as
	\begin{equation} \label{Eq:Diffeq}
		\langle  \p_t \tu(t)  
		+
		c_1 \tu(t)
		+ 
		c_0 \gradH\E(\mathcal{C}\tu(t)) 
		, \tv \rangle_{\tX'\times \tX}
		= 0,
		\qquad \forall \tv \in \tX.
	\end{equation}
The solution to this ODE with zero initial condition $\tu(0)=0$ can be formally written in the form
	\begin{equation}\label{eq:solutionode}
		\tu(t,\theta) = -c_0(\theta)\int_{0}^{t}e^{-c_1(\theta)\cdot (t-s)}\gradH\E(\mathcal{C}\tu(s))\ds,
	\end{equation}
	in $X'$ for all $t\in(0,T)$ and $\theta \in [0,1]$.
	Indeed, we have by the Leibniz integral formula
\begin{align*}
		\pt \tu(t,\theta) 
		&= -c_0(\theta) \gradH\E(\mathcal{C}\tu(t)) + c_0(\theta)c_1(\theta) \int_0^t  e^{-c_1(\theta) (t-s)} \gradH\E(\mathcal{C}\tu(s)) \ds 
		\\
		&= -c_0(\theta) \gradH\E(\mathcal{C}\tu(t)) - c_1(\theta)\tu(t).
\end{align*}
Then, applying the operator $\mathcal{C}$ to~\eqref{eq:solutionode}, we can write
\begin{equation*} \label{eq:uafterode}
		\mathcal{C}\tu(t)=
		u_0-\int_{0}^{t}(1, c_0 e^{-c_1\cdot(t-s)})_{L_{\alpha}^2(0,1)} \cdot \gradH\E(\mathcal{C}\tu(s))\ds.
\end{equation*}
By \cref{th:kernel_rep}, the kernel reads $(1, c_0 e^{-c_1\cdot(t-s)})_{L_{\alpha}^2(0,1)} = g_\alpha(t-s)$, and thus it holds
\begin{equation*}
\mathcal{C}\tu = u_0- \I_{\alpha} \gradH\E(\mathcal{C}\tu).
\end{equation*}
Given~\cref{Eq:InverseConvolution} and $\mathcal{C}\tu(0)=u_0$, it can be written as
\begin{equation*}
\I_{\alpha}\left[\pta\mathcal{C}\tu +  \gradH\E(\mathcal{C}\tu)\right] = 0.
\end{equation*}
Hence we conclude that $\mathcal{C}\tu$ is a solution to~\eqref{eq:frational_GF2}.
	\medskip

\noindent \textit{Fractional to augmented}.	 Now, let us assume that $u$ is a solution to~\eqref{eq:frational_GF2}.
Then, it satisfies $u = u_0- \I_{\alpha} \gradH\E(u)$.
Besides, for $\tu=\G{u}$ given by~\eqref{eq:tu(u)}, applying the operator~$\mathcal{C}$ and \cref{th:kernel_rep}, we obtain
\begin{equation*}\label{key}
	\mathcal{C}\tu = u_0- \I_{\alpha} \gradH\E(u).
\end{equation*}
Hence it follows that $\mathcal{C}\tu=u$, and therefore $\tu$ writes in the form~\cref{eq:solutionode}, which is a solution to~\cref{Eq:Diffeq}.
Thus, we conclude that $\G{u}$ is a solution to~\eqref{eq:integer_GF}.
\medskip
	
\noindent \textit{Regularity.} Finally, let us comment on the solutions regularity. 
The existence of a solution to \eqref{eq:frational_GF2} with regularity \begin{equation*} u \in L^p(0,T;X) \cap L^\infty(0,T;H) \cap W^{\alpha,p'}(0,T;H),\end{equation*} directly follows from \cref{th:prelim:Existence}. Similarly to \cref{th:prelim:Existence}, we proceed in a discrete setting in order to derive suitable energy estimates. Afterwards, one passes to the limit by the same type of arguments. We skip the details and directly state the estimates for~$\tu$.
Let us test equation~\eqref{Eq:Diffeq} with $\tv = \p_t\tu + \tu \in \tH$  to obtain
\begin{equation*}
\norm{\p_t\tu(t)}_{\tH}^2
+ 
\frac12\ddt\norm{\tu(t)}_{\tH}^2
+
\ddt \H(\tu(t))
+
\ddt \E(u(t))
+
2\H(\tu(t))
+
(\gradH\E(u(t)),u(t)-u_0)_H
= 0.
\end{equation*}
Note that it holds 
\begin{equation*} 
\norm{\tv}_{L^2_{\alpha,0}(0,T;H)}^2 + \norm{\tv}_{L^2_{\alpha,1}(0,T;H)}^2 = \norm{\tv}_{L^2_{\alpha}(0,T;H)}^2,
\qquad
\H(\tv)=\frac12\norm{\tv}_{L^2_{\alpha,1}(0,T;H)}^2.
\end{equation*}
Then, owing to the zero initial condition for $\tu$, integration in time from zero to $t\le T$ results in
\begin{equation*}
\begin{split}
\norm{\p_t\tu}_{L^2(0,t;\tH)}^2 
&+ 
\frac12\norm{\tu(t)}_{L^2_{\alpha}(0,1;H)}^2
\\ 
&\le 
\E(u_0)-\E(u(t)) + C_0\,\left(t + \norm{u}_{L^p(0,t;X)}^{p}\right)\,\norm{u_0}_X + C_2 \norm{u}_{L^2(0,t;H)}^2,
\end{split}
\end{equation*}
where we also used \eqref{eq:continuity}-\eqref{eq:coercivity}.
Hence, we end up with $\tu \in L^\infty(0,T;L^2_{\alpha}(0,1;H)) \cap H^1(0,T;L^2_{\alpha,0}(0,1;H))$. Lastly, we show that $\tu \in L^{\infty}(0,1; L^{p'}(0,T;H))$. Indeed, we have by Young's convolution inequality \cite[Lemma~A.1]{kubica}
	\begin{align*} \sup_{\theta \in (0,1)} \int_0^T   \|\tu(t,\theta)\|_H^{p'}\dt %
		 &= \sup_{\theta \in (0,1)} \left( |c_0(\theta)|^{p'} \int_0^T  \big(e^{-c_1(\theta)(\cdot)}* \|\gradH\E(u(s))\|_H\big)(t)^{p' } \dt \right)
		\\ &= \sup_{\theta \in (0,1)} |c_0(\theta)|^{p'} \|e^{-c_1(\theta)(\cdot)}* \|\gradH\E(u(s))\|_H \|_{L^{p'}(0,T)}^{p'} 
		\\ &\leq \sup_{\theta \in (0,1)} \left( \int_0^T |c_0(\theta)| e^{-c_1(\theta)t} \dt \right)^{p'} \cdot \|\gradH\E(u(s)) \|_{L^{p'}(0,T;H)}^{p'}  
		\\ &\leq C(T,u_0) \sup_{\theta \in (0,1)} \left( \int_0^T |c_0(\theta)| e^{-c_1(\theta)t} \dt \right)^{p'},
	\end{align*}
	and the integral on the right hand side is bounded owing to %
	\begin{equation*}
		c_0(\theta) \int_0^T e^{-c_1(\theta)t} \dt =(1+c_1(\theta))\, \frac{1-e^{-c_1(\theta)T}}{c_1(\theta)} =  T\,\frac{1-e^{-c_1(\theta)T}}{c_1(\theta)T} + 1-e^{-c_1(\theta)T}\leq T+1. \qedhere
	\end{equation*}

\end{proof}

\begin{remark}
	Remark that the augmented gradient flow system~\eqref{Eq:Diffeq} writes
	\begin{equation}\label{eq:rmk:ode}
	(1-\theta)\,\p_t \tu(t,\theta)  
	+
	\theta\,\tu(t,\theta)
	+ 
	\gradH\E(u(t))
	= 0
	\end{equation}
	in $L^2_{\alpha}(0,1;H)$.
	That is, for $\theta\in(0,1)$, $\tu(t,\theta)$ presents a smooth transition from $\tu(t,0) = -\int_{0}^{t}\gradH\E(u(s))\d s$ to $\tu(t,1) = -\gradH\E(u(t))$.
	Thus, the fractional gradient flow solution \cref{eq:u(tu)}
	corresponds to a weighted average of the transient solution $\tu(t,\theta)$.
\end{remark}

\begin{remark}\label{rmk:SteadyState}
	Note that in case of a steady state $\pt\tu=0$, the equation~\cref{eq:rmk:ode} results in $\gradH\E(u)= 0$ when $\theta=0$ owing to the regularity specified in \cref{th:energy}, and in $\tu = -\theta^{-1}\gradH\E(u)$ when $\theta\ne 0$.
	Thus, the modes $\tu(t,\theta)$, $\theta\ne 0$, vanish at the steady state, and therefore $\H(\tu)$ also vanishes.
\end{remark}

\subsection{Augmented energy and memory contribution}

The solution to the time-fractional gradient flow problem~\eqref{eq:frational_GF2} can be represented in the form $u=\mathcal{C}\tu$ as a linear combination of different modes~$\tu(\cdot,\theta)$, with index $\theta$, solutions to the classical gradient flow~\eqref{eq:integer_GF}.
Moreover, we can formally define 
the augmented total energy
\begin{equation}\label{eq:gen_energy}
	\E^\text{aug}(u)
	=
	\E(u)
	+ \H(\G{u})
	= 	\E(\mathcal{C}\tu)
	+ \H(\tu) = 
	\tE(\tu),
\end{equation}
where the history part $\H(\tu)$ corresponds to the memory contribution.
In the following lemma, we show that $\H(\tu)$ is bounded and that the memory effects vanish when $\alpha=1$ (i.e., for the classical gradient flow).

\begin{corollary} \label{cor:vanish}
	The memory energy contribution $\H(\tu(t))$ is bounded for a.a. $t\in[0,T]$, $\alpha \in (0,1)$, and vanishes when $\alpha$ goes to $1$. 
\end{corollary}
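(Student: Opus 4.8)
The plan is to derive the bound on $\H(\tu(t))$ directly from results already established, and then take the limit $\alpha \to 1$. First I would recall from \cref{th:energy} that $\tu \in L^\infty(0,1;L^{p'}(0,T;H))$, with the quantitative estimate obtained in the regularity step of that proof; combined with \cref{Eq:BoundH} of \cref{th:Hv_est} applied with $p$ replaced by $p'$, this immediately yields
\begin{equation*}
\norm{\H(\tu)}_{L^{p'/2}(0,T)} \le \frac{1-\alpha}{2}\,\norm{\tu}_{L^\infty(0,1;L^{p'}(0,T;H))}^2 \le \frac{1-\alpha}{2}\,C(T,u_0),
\end{equation*}
so that $\H(\tu(t))$ is finite for almost every $t \in (0,T)$ and $\alpha \in (0,1)$. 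This is the boundedness assertion.

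For the vanishing as $\alpha \to 1$, the key observation is that the constant in front is exactly $\frac{1-\alpha}{2}$, which comes from $\int_0^1 w_{\alpha,1}(\theta)\dth = 1-\alpha$, see \cref{eq:int_w}. The remaining task is to check that $\norm{\tu}_{L^\infty(0,1;L^{p'}(0,T;H))}$ stays bounded uniformly in $\alpha$ as $\alpha \uparrow 1$, so that the product tends to zero. Here I would revisit the explicit estimate at the end of the proof of \cref{th:energy}: we had
$$\sup_{\theta \in (0,1)} \int_0^T \|\tu(t,\theta)\|_H^{p'}\dt \le C(T,u_0)\,\sup_{\theta\in(0,1)}\Big(c_0(\theta)\int_0^T e^{-c_1(\theta)t}\dt\Big)^{p'} \le C(T,u_0)\,(T+1)^{p'},$$
and the constant $C(T,u_0)$ depends on $\alpha$ only through $\|\gradH\E(u)\|_{L^{p'}(0,T;H)}$, which is controlled by the $\alpha$-uniform energy estimate \cref{Eq:EnergyFinal2}. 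Since the right-hand side of \cref{Eq:EnergyFinal2} is $C(T)(1+\|u_0\|_H^2)$ with $C(T)$ coming from the fractional Grönwall constant, one must check that this Grönwall constant does not blow up as $\alpha \to 1$; this is plausible since in the limit it reduces to the classical Grönwall constant $e^{C_2 T}$. Granting this, $\norm{\tu}_{L^\infty(0,1;L^{p'}(0,T;H))}$ is bounded uniformly in $\alpha$ near $1$, hence $\H(\tu(t)) \le \tfrac{1-\alpha}{2}\,C(T,u_0) \to 0$.

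The main obstacle I anticipate is precisely the uniform-in-$\alpha$ control: one needs the various constants—the fractional Grönwall constant $C(\alpha,C_2,T)$ from \cref{Lem:FractionalGronwall1}, the factor $T^{1-\alpha}$ from \cref{Ineq:Aux}, and $\|g_\alpha\|_{L^1(0,T)} = T^\alpha/\Gamma(1+\alpha)$—to remain bounded as $\alpha \to 1^-$. Each of these behaves benignly in that limit ($T^{1-\alpha}\to 1$, $\Gamma(1+\alpha)\to 1$, and the Grönwall constant converges to its integer-order value), but making this rigorous requires either tracking the $\alpha$-dependence through \cref{Lem:FractionalGronwall1} or invoking a continuity-in-$\alpha$ argument. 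Alternatively, and more simply, one can bypass this by noting that \cref{Eq:BoundH} with the sharp constant $\frac{1-\alpha}{2}$ already suffices once any $\alpha$-uniform bound on $\norm{\tu}_{L^\infty(0,1;L^{p'}(0,T;H))}$ for $\alpha$ in a left-neighborhood of $1$ is available, and such a bound follows from the explicit computation $c_0(\theta)\int_0^T e^{-c_1(\theta)t}\dt \le T+1$ together with the observation that $\gradH\E(u)$ is bounded in $L^{p'}(0,T;H)$ independently of $\alpha$ by the hypotheses \cref{eq:continuity} and the $L^p(0,T;X)$-bound on $u$. I would therefore structure the proof to lean on this explicit route rather than on abstract limiting arguments.
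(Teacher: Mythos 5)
Your proposal is correct and follows essentially the same route as the paper: both the boundedness and the vanishing as $\alpha\to 1$ are read off from \cref{Eq:BoundH} combined with the regularity of $\tu$ established in \cref{th:energy}, and you are in fact more careful than the paper, which invokes \cref{Eq:BoundH} for the limit without commenting on the $\alpha$-uniformity of $\norm{\tu}_{L^\infty(0,1;L^{p'}(0,T;H))}$ that your argument rightly identifies as the point needing justification. The only real difference is in the boundedness step, where the paper instead uses the decomposition $\H(\tu(t))=\tfrac12\norm{\tu(t)}_{L^2_{\alpha,1}(0,1;H)}^2 \le \tfrac12\norm{\tu(t)}_{L^2_{\alpha}(0,1;H)}^2$ together with $\tu\in L^\infty(0,T;L^2_{\alpha}(0,1;H))$ to conclude $\H(\tu)\in L^\infty(0,T)$, which is marginally stronger than the $L^{p'/2}(0,T)$ bound you obtain and matches the ``bounded for a.a.\ $t$'' phrasing more directly.
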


\begin{proof}
	By definition, it holds for the history part of the energy that
	\begin{equation*} \H(\tu(t))=\frac{1}{2}\norm{\tu(t)}_{L^2_{\alpha,1}(0,1:H)}^2 = \frac{1}{2}\norm{\tu(t)}_{L^2_{\alpha}(0,1;H)}^2 - \frac{1}{2}\norm{\tu(t)}_{L^2_{\alpha,0}(0,1;H)}^2,\end{equation*} 
	for all $t\ge 0$.
	According to~\cref{th:energy}, we have $\tu \in L^\infty(0,T;L^2_{\alpha}(0,1;H))$, hence it follows $\H(\tu)\in L^\infty(0,T)$.	
	Besides, it directly follows from~\cref{Eq:BoundH} that $\H(\tu(t))=0$ for a.e. $t\in[0,T]$ if $\alpha=1$.	
\end{proof}

Eventually, as a direct consequence of the equivalence to an integer-order gradient flow, we can prove the dissipation of the augmented energy.

\begin{corollary}
	The augmented energy~\eqref{eq:gen_energy} is monotonically decreasing in time.
\end{corollary}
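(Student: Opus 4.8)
The plan is to read the statement off the equivalence of \cref{th:energy}. By \eqref{eq:gen_energy} we have $\E^{\text{aug}}(u(t))=\tE(\tu(t))$ for a.e.\ $t\in(0,T)$, where $\tu=\G{u}$ is the solution of the augmented gradient flow \eqref{eq:integer_GF}, equivalently of the variational equation \eqref{Eq:Diffeq}, with $\tu(0)=0$. Hence it suffices to prove that $t\mapsto\tE(\tu(t))$ is non-increasing; this is the principle of steepest descent for the integer-order flow $\pt\tu=-\gradHt\tE(\tu)$, and the classical chain rule, which was unavailable in the fractional setting, now does the work.

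The key computation is to test \eqref{Eq:Diffeq} with $\tv=\pt\tu$. Since $\gradHt\tE(\tu)=c_1\tu+c_0\gradH\E(\mathcal C\tu)$ (the $\tH$-gradient computed in the proof of \cref{th:energy}) and $\tE=\E\circ\mathcal C+\H$, the two cross terms can be identified, using the product rule for the quadratic functional $\H$ and the Fr\'echet chain rule for $\E$ along $u=\mathcal C\tu$, as
\begin{equation*}
( c_1\tu,\pt\tu)_{\tH}=\ddt\H(\tu),\qquad
( c_0\gradH\E(\mathcal C\tu),\pt\tu)_{\tH}=\langle \gradH\E(u),\pt u\rangle_{X'\times X}=\ddt\E(u),
\end{equation*}
where one uses that $\mathcal C$ is affine and bounded, so that $\pt u=\mathcal C\pt\tu$ gives $\pt u\in L^2(0,T;H)$ and $u\in H^1(0,T;H)\cap L^p(0,T;X)$. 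Substituting yields the dissipation identity
\begin{equation*}
\norm{\pt\tu(t)}_{\tH}^2+\ddt\tE(\tu(t))=0,
\end{equation*}
i.e.\ $\ddt\E^{\text{aug}}(u(t))=-\norm{\pt\tu(t)}_{\tH}^2\le0$ for a.e.\ $t$, which is the claim. Equivalently, one may test \eqref{eq:rmk:ode} with $\pt\tu$ directly (in $L^2_\alpha(0,1;H)$) and, using $(1-\theta)w_\alpha=w_{\alpha,0}$ and $\theta w_\alpha=w_{\alpha,1}$, obtain the same identity.

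The point that requires care is the rigorous justification of $\tv=\pt\tu$ as a test function and of the attendant chain rule: the variational solution only satisfies $\tu\in H^1(0,T;\tH)$ with $\tH=L^2_{\alpha,0}(0,1;H)$, whereas \eqref{Eq:Diffeq} is a priori tested only against $\tX=L^2_\alpha(0,1;X)$, and the weights $c_0,c_1$ make $\gradHt\tE(\tu)$ genuinely $\tX'$-valued ($\theta\mapsto c_1(\theta)w_{\alpha,1}(\theta)$ is not integrable near $\theta=1$), so one cannot simply write $\langle\pt\tu,\pt\tu\rangle=\norm{\pt\tu}_{\tH}^2$ for the limit function. I would proceed exactly as in the proof of \cref{th:energy}: establish the identity above in the Galerkin scheme, where $\pt\tu_k(t)$ lies in the finite-dimensional space $\tX_k\subset\tX$ and the classical chain rule for the Fr\'echet-differentiable $\tE$ along the absolutely continuous curve $\tu_k$ is valid, giving $\ddt\tE(\tu_k(t))=-\norm{\pt\tu_k(t)}_{\tH}^2\le0$ and hence $\tE(\tu_k(t))\le\tE(\tu_k(s))$ for $0\le s\le t\le T$. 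The main obstacle is then the passage to the limit $k\to\infty$ in this monotonicity: it needs the weak-$*$ lower semicontinuity of $\tE=\E\circ\mathcal C+\H$ (lower semicontinuity of $\E$ as already exploited in \cref{th:prelim:Existence2}, linearity and boundedness of $\mathcal C$, and that $\H$ is a squared weighted norm), combined with a weak--strong convergence argument for $\tu_k(s)\to\tu(s)$ as in \cref{th:prelim:Existence} (and, if needed, Helly's selection principle for the uniformly bounded non-increasing functions $t\mapsto\tE(\tu_k(t))$), in order to conclude $\E^{\text{aug}}(u(t))\le\E^{\text{aug}}(u(s))$ for a.e.\ $0\le s\le t\le T$; no hypotheses beyond those of \cref{th:prelim:Existence} enter.
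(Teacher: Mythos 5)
Your proposal is correct and follows essentially the same route as the paper: the paper's proof is exactly the two-line argument ``by \cref{th:energy} the flow is equivalent to the integer-order gradient flow, hence $\ddt\tE(\tu)=-\norm{\pt\tu}_{\tH}^2\le 0$ by the chain rule.'' Your additional work---testing with $\pt\tu$, identifying the cross terms, and justifying the chain rule at the Galerkin level before passing to the limit---is a more careful elaboration of the same idea rather than a different approach.
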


\begin{proof}
	According to~\cref{th:energy}, the time-fractional gradient flow is equivalent to the integer-order gradient flow 
	and thus, $\ddt \tE(\tu) = -\norm{\drv{t}{\tu}}^2_\tH \leq 0$
	by the chain rule.
\end{proof}

\begin{contexample} 
We return to the example of \cref{Ex:1} and investigate the representation of the history energy $\mathcal{H}$ for these specific gradient flows. We have the Allen--Cahn and Cahn--Hilliard equations
\begin{equation*}
		\pta u = - \gradH\E(u) = \begin{cases}
	-\mu, &H=L^2(\Omega), \\   
	\Delta \mu, &H=\dot H^{-1}(\Omega),\end{cases}
\end{equation*}
	where we defined the chemical potential $\mu=f'(u)-\Delta u$.
	Then, we can compute $\tu$ according to \cref{eq:solutionode} and calculate the augmented energy $\tE$ as given in \cref{eq:gen_energy}.
	In case of our examples, it yields for $\tu$:
	\begin{equation*}
			\tu(t,\theta) = c_0(\theta)\int_{0}^{t}e^{-c_1(\theta) \cdot (t-s)}\begin{lrdcases}
	 -\mu \\   
	 \Delta \mu
		\end{lrdcases}\ds, 
	\end{equation*}
	and for the augmented energy:
\begin{equation*}
\begin{aligned}
	\E^\textup{aug}(u) 
	= \E(u) + \H(\tu)
	&=
	\E(u)  + \frac12 \int_0^1 \|\tu(t,\theta)\|_H^2 \, w_{\alpha,1}(\theta) \, \dd \theta
	\\
	&= \E(u) + \frac12 \int_0^1 \int_\Omega \begin{lrdcases} 
|\tu(t,\theta,x)|^2  \\    |\nabla (-\Delta)^{-1} \tu(t,\theta,x)|^2 \end{lrdcases} \dx \, w_{\alpha,1}(\theta)\, \dd \theta,
\end{aligned}
\end{equation*}
which can be written in terms of the chemical potential $\mu$ as
\begin{equation*}
		\begin{aligned}		\E^\textup{aug}(u) 
&=
\E(u) +\frac12 \int_0^1 \int_\Omega c_0(\theta)^2 \left| \int_0^t e^{-c_1(\theta) \cdot (t-s)} \begin{lrdcases} 
	\mu  \\    \nabla \mu \end{lrdcases} \ds \right|^2 \dx \, w_{\alpha,1}(\theta)\, \dd \theta.\end{aligned}
\end{equation*}

\end{contexample}

\section{Numerical algorithm} \label{Sec:Num}

Numerical methods for approximating the solution of a time-fractional differential equation are a fast developing research field.
For a detailed overview of the existing methods, we refer to~\cite{diethelm2020good,diethelm2008investigation,baleanu2012fractional,owolabi2019numerical}.
Among classical methods related to the quadrature of the fractional integral~\cite{lubich1983stability,lubich1986discretized,lubich1988convolution,diethelm2002predictor,diethelm2004detailed,zayernouri2016fractional,zhou2019fast}, there is a class of so-called kernel compression methods based on the approximation of the spectrum of the fractional kernel~\cite{Li2010,mclean2006time,jiang2017fast,zeng2018stable,baffet2019gauss,baffet2017kernel,banjai2019efficient}.
The general idea of such methods is to approximate the fractional kernel with a sum of exponentials, which leads to a system of local ODEs similar to~\cref{eq:rmk:ode}.
In our numerical experiments, we make use of the scheme proposed in~\cite{khristenko2021solving}, which is based on the rational approximation of the Laplace spectrum of the fractional kernel.

	A unified algorithm for the numerical approximation of $\mathcal{C} \tu$, see~\cref{eq:u(tu)}, can be seen as a quadrature rule for the $w_{\alpha}$-weighted integral
	\begin{equation*}\label{key}
	u(t)-u_0 = \int_{0}^{1}w_{\alpha}(\theta)\,\tu(\theta)\dth \approx \sum_{k=1}^{m}w_k \tu_k(t),
	\end{equation*}
	with $m\in \mathbb{N}$ quadrature nodes~$\theta_k$, the quadrature weights~$w_k$ and the modes~$\tu_k:=\tu(t,\theta_k)$ defined by the integer-order differential system:
	\begin{equation*}\label{key}
	(1-\theta_k)\,\p_t \tu_k(t)  
	+
	\theta_k\,\tu_k(t)
	+ 
	D\E(u(t))
	= 0, \qquad k=1,\ldots,m.
	\end{equation*}
	These equations can be discretized in time and space with any preferred method.
	The history energy $\mathcal{H}$, see~\cref{eq:def_HC}, is then approximated with
	\begin{equation*}\label{key}
	\mathcal{H}(\tu(t)) = \frac{1}{2}\int_{0}^{1}\theta\,w_{\alpha}(\theta)\,\norm{\tu(t,\theta)}_H^2\dth\approx \frac12\sum_{k=1}^{m}w_k \theta_k \norm{\tu_{k}(t)}_H^2.
	\end{equation*}	

	However, in order to implement the method from~\cite{khristenko2021solving} based on rational approximations, we have first to reparameterize the integral of $\mathcal{C}\tu$, see~\cref{eq:u(tu)}.
	Using the change of variable $\lambda:=\theta/(1-\theta)$, it can be rewritten in the form
	\begin{equation*}\label{Eq:reparametrized}
	u(t)-u_0 
	= \frac{\sin(\pi\alpha)}{\pi}\int_{0}^{1}\theta^{-\alpha}(1-\theta)^{-\alpha}\,\tu(t,\theta)\dth
	=
	\frac{\sin(\pi\alpha)}{\pi}\int_{0}^{\infty}\frac{\lambda^{-\alpha}}{1+\lambda}\,\tu(t,\theta(\lambda))\dl.
	\end{equation*}
	Then, we use the following quadrature rule on the integral:
	\begin{equation}\label{Eq:NewQuad}
	u(t)
	\approx
	u_0 + \sum_{k=1}^{m}\frac{w_k}{1+\lambda_k} \tu_{k}(t) + w_\infty \tu_\infty(t)
	,
	\end{equation}
	where $\tu_{k}:=\tu(t,\theta(\lambda_k))$, the weights~$w_k$ and the nodes~$\lambda_k$ are given by the residues and the poles, respectively, in the rational approximation of the Laplace spectrum of the fractional kernel~$g_\alpha$, see~\cite{khristenko2021solving} for the details.
	In particular, we use the adaptive Antoulas--Anderson (AAA) algorithm~\cite[Fig. 4.1]{nakatsukasa2018aaa} to determine the rational approximation
	\begin{equation*}\label{key}
	z^{-\alpha} \approx \sum_{k=1}^{m}\frac{w_k}{z+\lambda_k}  + w_\infty, \qquad z\in[1/T,1/h].
	\end{equation*}
	Thus, the solution~$u$ is approximated by~\cref{Eq:NewQuad} with a linear combination of $m+1$ modes~$\tu_k=\tu(t,\theta_k)$, associated with the nodes $\theta_k=\lambda_k/(1+\lambda_k)$ and defined by the following system:
	\begin{equation*}\label{key}
	\begin{aligned}
	\frac{1}{1+\lambda_k}\pt \tu_k(t) + \frac{\lambda_k}{1+\lambda_k}\tu_k(t) + D\E(u(t)) &= 0,
	\qquad
	\qquad k=1,\ldots,m,\\
	\tu_{\infty}(t) + D\E(u(t)) &= 0.
	\end{aligned}	
	\end{equation*}	
	Accordingly, the history energy integral is reparametrized and approximated using the same quadrature rule:
	\begin{align*}\label{key}
	\mathcal{H}(\tu(t))
	&=
	\frac{1}{2}\,\frac{\sin(\pi\alpha)}{\pi}\int_{0}^{\infty}\frac{\lambda^{1-\alpha}}{(1+\lambda)^2}\,\norm{\tu(t,\theta(\lambda))}_H^2\dl
	\\
	&\approx 
	\frac12\sum_{k=1}^{m}\frac{w_k \lambda_k}{(1 + \lambda_k)^2} \norm{\tu_{k}(t)}_H^2 + \frac12 w_{\infty}\norm{\tu_{\infty}(t)}_H^2.
	\end{align*}

\section{Numerical example} \label{Sec:Sim}

In this section, we investigate the effect of the history part of the energy on the evolution of the Ginzburg--Landau energy which we have already seen in \cref{Ex:1}. It is given by the formula
\begin{equation*} \E(u)  = \int_\Omega \Psi(u) + \frac{\eps^2}{2} |\nabla u|^2 \dx.
	\end{equation*}
We take the underlying Hilbert space $H=\dot H^{-1}(\Omega)$ which yields the Cahn--Hilliard equation. The numerical results of this section have been obtained by implementing the procedure of the previous section in FEniCS \cite{fenics}. 	Moreover, for the discretization in time and space, we follow the convex-concave splitting scheme proposed in~\cite[Section 6.2]{khristenko2021solving} for the time-fractional Cahn-Hilliard equation, using standard mixed $Q1-Q1$ finite elements. 

\subsection{Simulation setup}
We apply the time-fractional Cahn--Hilliard equation to a phase separation process such as in the case of binary alloys. We assume zero source, homogeneous Neumann boundary, and we take a randomly distributed initial condition $u_0$ in the interval $[-10^{-3},10^{-3}]$. In order to ensure the phase-separation process, it is recommended \cite{deckelnick2005computation} to scale the prefactor of $\Psi$ with the interfacial width $\eps$. In particular, we introduce $\tilde{\eps},\beta>0$, define $\eps^2=\tilde \eps \beta$, and choose the prefactor of $\Psi$ as $\beta/\tilde{\eps}$. In the simulations, we select $\tilde \eps=0.05$, $\beta=0.1$, $M=1$. Consequently, the scaled Landau potential $\Psi(u)=\tfrac12(1-u^2)^2$ and the interface width
$\eps^2=0.005$ is considered.

For the simulation setup, we choose a domain $\Omega=(0,1)^2$ with the mesh size  $\Delta x=2^{-7}$ to ensure that the interface can be resolved and the condition $0.0078 \approx \Delta x \ll \tilde \eps=0.05$ holds, see \cite{deckelnick2005computation}. Concerning the time step size, we want to satisfy the condition for energy stability in case of the implicit scheme in the integer-order setting, see \cite{barrett1999finite}. The condition writes $\Delta t<4\eps^2/(M C_\Psi^2)$, where $C_\Psi$ is the prefactor of the potential. We employ the convex-concave splitting scheme which is unconditionally stable in the integer-order setting $\alpha=1$, see \cite{elliott1993global}. Nonetheless, the condition for the implicit scheme is a good indicator for the solution to behave well; we refer to the discussion in \cite{wu2014stabilized} and the included numerical simulations which show incorrect solution behavior for larger time steps regardless of the unconditional stability. In our case, it suggests $\Delta t<0.01$ and we choose $\Delta t = 0.005$ over the time interval $[0,5]$.

\subsection{Simulation result}
We plot the evolution of the Ginzburg--Landau energy $\E$ in \cref{Plot:Energya} for six values of the fractional exponent; namely $\alpha \in \{0.1,0.3,0.5,0.7,0.9,1.0\}$. Firstly, we observe that the energy decays monotonically for each $\alpha$. We want to point out once more that it is not known whether the energy of a time-fractional gradient flow is dissipating, see the discussions in \cite{zhang2020non,liu2020fast,liao2020second,chen2019accurate}.
Consequently, no counter example of an increasing energy is known. Nonetheless, we investigate the behavior of the history energy and the influence on the augmented energy which is naturally decaying. 

We observe in \cref{Plot:Energya} that smaller $\alpha$ admit an earlier energy drop of larger magnitude at the initial time than large values of~$\alpha$. E.g., for $\alpha=0.1$ the energy drops from $\E=0.5$ to $\E\approx 0.16$ at $t \approx 0.16$, whereas for $\alpha=0.9$ it goes from $\E=0.5$ to $\E \approx 0.3$ at $t \approx 0.24$. This corresponds to the instantaneous process of time-fractional PDEs, see also the numerical studies in~\cite{fritz2020subdiffusive}. Furthermore, we observe that the energy reaches its asymptotic regime at $\E\approx 0.12$ at different times for each $\alpha$, e.g., for $\alpha=0.1$ at $t \approx 1.25$ versus $t \approx 2.4$ for $\alpha=0.7$.

\begin{figure}[!htb] \centering
	 \begin{subfigure}[t]{0.485\textwidth} \centering	\includegraphics[page=1,clip,width=\textwidth,trim={0 .5cm 0 0}]{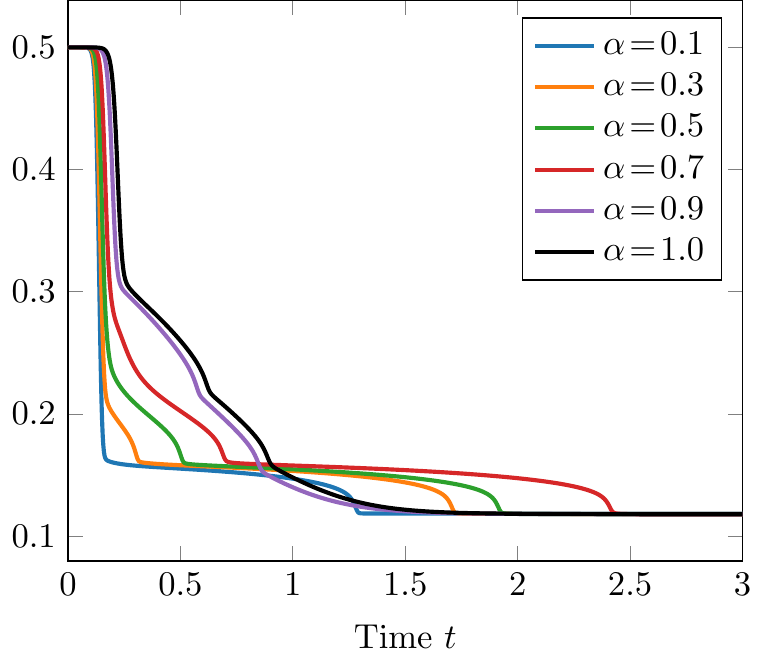}
	 	\caption{\label{Plot:Energya}Energy $\E$.} 
	\end{subfigure}	 \quad
	\begin{subfigure}[t]{0.485\textwidth} \centering	\includegraphics[clip,width=\textwidth,trim={0 .5cm 0 0}]{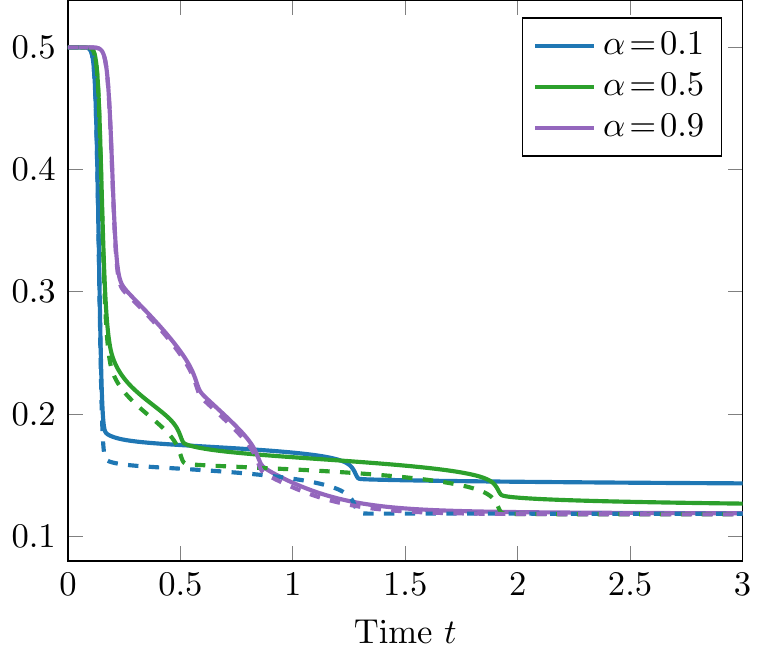}
		\caption{\label{Plot:Energyb}Energy $\E$ (dashed) and augmented energy $\E^\text{aug}$.} 
	\end{subfigure}	
			\caption{\label{Plot:Energy}
				Left: Evolution of the standard energy $\E$ for $\alpha \in \{0.1,0.3,0.5,0.7,0.9,1.0\}$.
				Right: Comparison of the augmented energy~$\E^\text{aug}$ and original energy $\E$ (dashed) for $\alpha \in \{0.1,0.5,0.9\}$.}
\end{figure}
\pagebreak 

In \cref{Plot:Energyb}, we plot the difference between the energy of the time-fractional gradient flow and the energy of the augmented system. We observe that the augmented energy deviates from the original energy from the first energy drop on, e.g., for $\alpha=0.1$ the energy drop is not as large for the augmented energy (from $\E=\E^\text{aug}=0.5$ to $\E^\text{aug} \approx 0.18$) as for the original one (to $\E\approx 0.16$). Consequently, it takes a longer time for the augmented energy to reach such an equilibrium-like state as the original energy. We interpret the plot as the augmented energy cushions the hard kink of the original energy and softens the curve shape. Rephrasing, one could say that the augmented energy avoids the places where the original energy is almost constant and runs the risk of being non-dissipative. %

We plot the history energy for $\alpha \in \{0.1,0.5,0.9,1.0\}$ in \cref{Plot:Historya}.
In the case of an integer-order gradient flow, $\alpha=1$, we observe the expected scenario -- the history energy is zero (see~\Cref{cor:vanish}).
That is, no memory effects are present, and thus the augmented and the standard energies are the same.
For the other values of $\alpha$, we observe several local maxima at the places where the energy admitted the kinks.
E.g., for $\alpha=0.1$ we had kinks in the energy in \cref{Plot:Energyb} at $t \approx 0.16$ and $t \approx 1.25$, whereas the history energy for $\alpha=0.1$ admits its local maxima at these points.
We also observe that smaller values of $\alpha$ lead to a larger history contribution.

In the asymptotic regime, as soon as the energy approaches its equilibrium state, the history part monotonically decays to zero (see also~\Cref{rmk:SteadyState}).
In order to estimate the asymptotic slope of the history energy, we plot it for $\alpha \in \{0.1,0.3,0.5\}$ on the time interval $[3,5]$ on a logarithmic scale in \cref{Plot:Historyb}, comparing to the slopes of $t^{-\beta}$ with $\beta \in \{0.11,0.36,0.65\}$ for each $\alpha$ respectively.
We observe a good fitting in the asymptotic regime of the history energy with a parameter $\beta$ which is close to the fractional exponent $\alpha$. Moreover, we observe numerically that the history part can be asymptotically bounded by $C(\alpha) t^{-\alpha}$ with $C(\alpha)$ tending to zero for $\alpha \to 1$.

\begin{figure}[!htb] \centering
	\begin{subfigure}[t]{0.46\textwidth} \centering	\includegraphics[width=\textwidth]{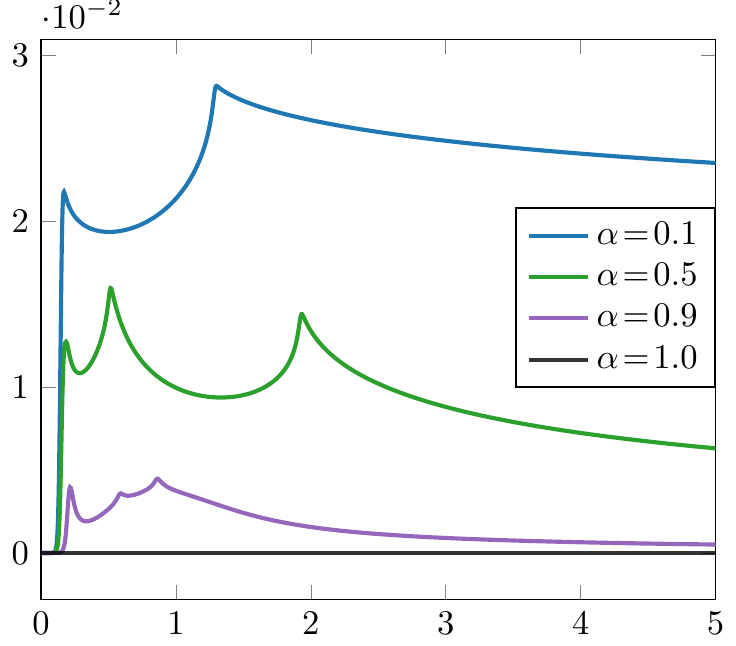}
	\caption{\label{Plot:Historya}History energy $\H$.} 
\end{subfigure}	
\begin{subfigure}[t]{0.495\textwidth} \centering	\includegraphics[width=\textwidth]{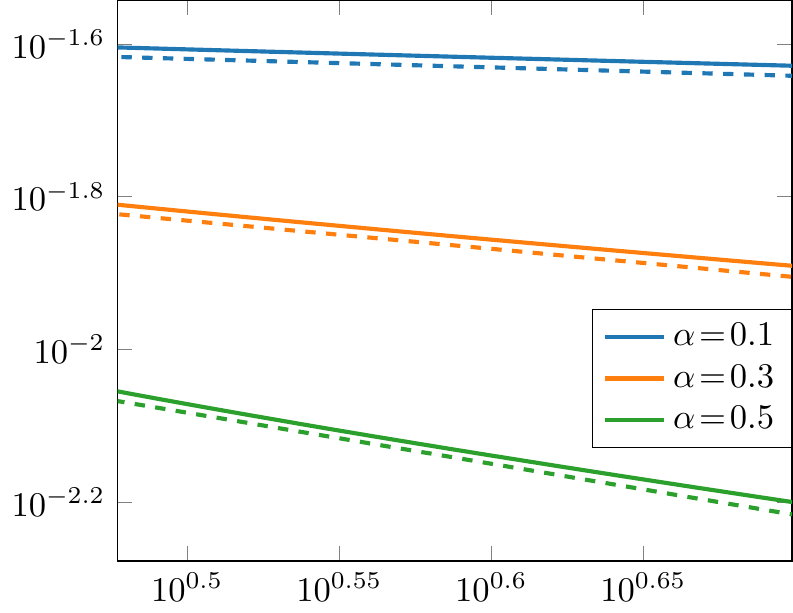}
	\caption{\label{Plot:Historyb}Asymptotic behavior of the history energy~$\H$.}
\end{subfigure}	
			\caption{\label{Plot:History}
			Left: The history energy $\H$ for $\alpha \in \{0.1,0.5,0.9,1.0\}$.
			Right: Asymptotic behavior of the history energy~$\H$ on the logarithmic scale. Dashed lines corresponds to the slope of $t^{-\beta}$ with $\beta \in \{0.11,0.36,0.65\}$, from top to bottom, respectively 
		}
\end{figure}

\section*{Acknowledgments}
The authors gratefully acknowledge the support of the German Science Foundation (DFG) for funding part of this work through grant WO 671/11-1 and the European Union's Horizon 2020 research and innovation program under grant agreement No 800898.
\pagebreak 

\section*{References}
	\setlength{\bibsep}{0.75pt}
{\footnotesize
 \bibliography{literature}
\bibliographystyle{elsarticle-num}}

\end{document}